\theoremstyle{plain}
\newtheorem{theorem}{Theorem}[section]
\newtheorem{lemma}[theorem]{Lemma}
\newtheorem{proposition}[theorem]{Proposition}
\newtheorem{corollary}[theorem]{Corollary}
\theoremstyle{definition}
\newtheorem{definition}[theorem]{Definition}
\newtheorem{examples}[theorem]{Examples}
\newtheorem{example}[theorem]{Example}
\theoremstyle{remark}
\newtheorem{remark}[theorem]{Remark}
\newtheorem{assumption}{Assumption}
\newenvironment{eqcond}{\begin{enumerate}}{\end{enumerate}}
\newcommand{\Rw}{\Rightarrow}
\newcommand{\hrw}{\hookrightarrow}
\newcommand{\RLw}{\Leftrightarrow}
\newcommand{\ff}{\mathfrak{f}}
\newcommand{\fx}{\mathfrak{x}}
\newcommand{\calA}{\mathcal{A}}
\newcommand{\fF}{\mathfrak{F}}
\DeclareMathOperator{\Fix}{Fix}
\DeclareMathOperator{\downc}{\downarrow\!}
\DeclareMathOperator{\ev}{ev}
\newcommand{\mate}[1]{\,^\ulcorner\! #1^\urcorner}
\newcommand{\catfont}[1]{\mathsf{#1}}
\newcommand{\SET}{\catfont{Set}}
\newcommand{\REL}{\catfont{Rel}}
\newcommand{\ORD}{\catfont{Ord}}
\newcommand{\SFRM}{\catfont{SFrm}}
\newcommand{\TOP}{\catfont{Top}}
\newcommand{\TOPREL}{\catfont{TopRel}}
\newcommand{\ORDCH}{\catfont{OrdCompHaus}}
\newcommand{\STCOMP}{\catfont{StablyComp}}
\newcommand{\SPECREL}{\catfont{SpecRel}}
\newcommand{\SPEC}{\catfont{Spec}}
\newcommand{\STONE}{\catfont{Stone}}
\newcommand{\STONEREL}{\catfont{StoneRel}}
\newcommand{\COMPHAUS}{\catfont{CompHaus}}
\newcommand{\SOB}{\catfont{Sob}}
\newcommand{\CALG}{C^*\text{-}\catfont{Alg}}
\newcommand{\DLAT}{\catfont{DLat}}
\newcommand{\BOOL}{\catfont{Bool}}
\newcommand{\CABOOL}{\catfont{CABool}}
\newcommand{\Coalg}[1]{\catfont{Coalg}(#1)}
\newcommand{\CatOp}[2]{\catfont{#1}\text{-}\catfont{Op}(#2)}
\newcommand{\Mon}{\catfont{Mon}}
\newcommand{\LAdj}{\catfont{LAdj}}
\newcommand{\RAdj}{\catfont{RAdj}}
\newcommand{\EMX}{\catfont{X}^{(\_)}}
\newcommand{\KlX}{\catfont{X}_{(\_)}}
\newcommand{\Card}{\catfont{Card}}
\newcommand{\Alg}[1]{#1\text{-}\catfont{Alg}}
\newcommand{\relto}{{\longrightarrow\hspace*{-2.8ex}{\mapstochar}\hspace*{2.6ex}}}
\newcommand{\kto}{\relbar\joinrel\rightharpoonup}
\newcommand{\monadfont}[1]{\mathbbm{#1}}
\newcommand{\mT}{\monadfont{T}}
\newcommand{\mI}{\monadfont{1}}
\newcommand{\mV}{\monadfont{V}}
\newcommand{\mtV}{\hat{\monadfont{V}}}
\newcommand{\mF}{\monadfont{F}}
\newcommand{\mP}{\monadfont{P}}
\newcommand{\monad}{(T,e,m)}
\newcommand{\imonad}{(1,1,1)}
\newcommand{\fmonad}{(F,e,m)}
\newcommand{\ftmonad}{(F_\tau,e,m)}
\newcommand{\vmonad}{(V,e,m)}
\newcommand{\pmonad}{(P,e,m)}
\newcommand{\tvmonad}{(\hat{V},e,m)}
\newcommand\adjunct[2]{\xymatrix@=8ex{\ar@{}[r]|{\top}\ar@<1mm>@/^2mm/[r]^{{#2}} & \ar@<1mm>@/^2mm/[l]^{{#1}}}}
\newcommand{\adjunction}{(F\dashv G,\eta,\eps):\catfont{A}\rightleftarrows\catfont{X}}
\newcommand{\adjunctionb}{(F'\dashv G',\eta',\eps'):\catfont{A}'\rightleftarrows\catfont{X}}
\newcommand{\adjunctionT}{(F^\mT\dashv G^\mT,\eta,\eps):\catfont{X}^\mT\rightleftarrows\catfont{X}}
\newcommand{\doo}[1]{\overset{\centerdot}{#1}}
\newcommand{\eps}{\varepsilon}
\newcommand{\op}{\mathrm{op}}
\newcommand{\can}{\mathrm{can}}
\newcommand{\sep}{\mathrm{sep}}
\newcommand{\df}[1]{\emph{\textbf{#1}}}
\title{Dualities in modal logic from the point of view of triples}
\author{Dirk Hofmann}
\author{Pedro Nora}
\thanks{Partial financial assistance by FEDER funds through COMPETE -- Operational Programme Factors of Competitiveness (Programa Operacional Factores de Competitividade) and by Portuguese funds through the Center for Research and Development in Mathematics and Applications (University of Aveiro) and the Portuguese Foundation for Science and Technology (FCT -- Funda\c{c}\~ao para a Ci\^encia e a Tecnologia), within project PEst-C/MAT/UI4106/2011 with COMPETE number FCOMP-01-0124-FEDER-022690, and the project MONDRIAN under the contract PTDC/EIA-CCO/108302/2008 with COMPETE number FCOMP-01-0124-FEDER-010047 is gratefully acknowledge.}
\address{Center for Research and Development in Mathematics and Applications, Department of Mathematics, University of Aveiro, 3810-193 Aveiro, Portugal}
\email{dirk@ua.pt}
\email{a28224@ua.pt}
\date{\today}
\subjclass[2010]{%
03G05, 
03G10, 
18A40, 
18C15, 
18C20, 
54H10  
}
\keywords{Monad, Kleisli construction, dual equivalence, spectral space, Vietoris functor, distributive lattice, tensor product}
\begin{document}

\begin{abstract}
In this paper we show how the theory of monads can be used to deduce in a uniform manner several duality theorems involving categories of relations on one side and categories of algebras with homomorphisms preserving only some operations on the other. Furthermore, we investigate the monoidal structure induced by Cartesian product on the relational side and show that in some cases the corresponding operation on the algebraic side represents bimorphisms.
\end{abstract}

\maketitle

\section*{Introduction}

The title (and content) of this note is clearly inspired by the paper \citep{Neg71} where the author derives the classical duality theorems of Gelfand and Pontrjagin ``employing the theory of triples and using only a minimum of analytic information'' \citep{Neg71}. We recall that Gelfand's duality theorem states that the category $\CALG$ of commutative and unital $C^*$-algebras and homomorphisms is equivalent to the dual of the category $\COMPHAUS$ of compact Hausdorff spaces and continuous maps. The argument of \citep{Neg71} can  be summarised as follows:
\begin{itemize}
\item the categories $\CALG$ and $\COMPHAUS^\op$ are both tripleable ($=$ monadic) over $\SET$, that is, $\COMPHAUS^\op$ is equivalent to the category of Eilenberg--Moore algebras of some triple ($=$ monad), and also $\CALG$ is equivalent to the category of Eilenberg--Moore algebras of some monad (the proof of the latter fact needs the ``minimum of analytic information'');
\item the induced monads are equal.
\end{itemize}

At the heart of the equivalence between Kripke semantics and algebraic semantics in modal logic lies J\'onnson and Tarski's representation theorem for Boolean algebras with operator (see \cite{JT51,JT52}), which eventually led to a duality between the category of Boolean algebras with operator and homomorphisms and the category of ``Stone Kripke frames''. The latter category is elegantly described in \citep{KKV04} as the category of coalgebras for the Vietoris functor on the category $\STONE$ of Stone spaces and continuous maps. As advocated in \citep{Hal56} (see also \citep{Wri57}) and later in \citep{SV88}, this duality result can be seen as a consequence of the more general duality between the category $\STONEREL$ of Stone spaces and Boolean relations and the category of Boolean algebras with ``hemimorphisms'', that is, maps preserving finite suprema but not necessarily finite infima.

The starting point of this paper is the observation that the Vietoris functor is actually part of a monad on $\STONE$, and $\STONEREL$ is equivalent to the Kleisli category of this monad. This fact opens the door to use an argumentation similar to the one in \citep{Neg71}, but now with the Kleisli construction \emph{in lieu} of the Eilenberg--Moore construction. We hasten to remark that the situation here is actually much simpler than the one described above, basically because it is usually easy the see that a category is equivalent to a Kleisli category whereby monadicity is a property much harder to establish. Therefore, using the theory of monads, in this paper we derive in a uniform way several duality theorems involving categories of relations  and categories of algebras with ``hemimorphisms''. Our examples include Halmos duality as well as a similar result for the category $\SPECREL$ of spectral spaces and spectral relations (respectively Priestley spaces and Priestley relations, see \citep{CLP91}). We then proceed by investigating the monoidal structure on $\SPECREL$ and $\STONEREL$ induced by the topological product of spaces, and show that this structure corresponds under the aforementioned dualities to a tensor product which represents bimorphisms. 

\section{Kleisli adjunctions}

The purpose of this section is to recall briefly some well-known facts about monads and adjunctions. The main focus lies on the fact that the principal constructions are functorial and give rise to (large) adjunctions as explained in  \citep{Pum70,Pum88} and \citep{Tho74} (see also \citep{Por94}). For more information on monads we refer to \citep{MS04}.

\begin{definition}
An \df{adjunction} $\adjunction$ consists of
\begin{itemize}
\item the right adjoint functor $G:\catfont{A}\to\catfont{X}$,
\item the left adjoint functor $F:\catfont{X}\to\catfont{A}$,
\item the unit natural transformation $\eta:1_\catfont{X}\to GF$, and
\item the co-unit natural transformation $\eps:FG\to 1_\catfont{A}$
\end{itemize}
such that the diagrams
\begin{align*}
\xymatrix{F\ar[r]^-{F\eta}\ar@{=}[dr]_1 & FGF\ar[d]^-{\eps_F}\\ & F}
&  &
\xymatrix{G\ar[r]^-{\eta_G}\ar@{=}[dr]_1 & GFG\ar[d]^-{G\eps}\\ & G}
\end{align*}
commute. Let $\adjunction$ and $\adjunctionb$ be adjunctions over a fixed base category $\catfont{X}$. A \df{right morphism of adjunctions} over $\catfont{X}$ is a functor $J:\catfont{A}\to\catfont{A}'$ with $G=G'J$. Likewise, a \df{left morphism of adjunctions} over $\catfont{X}$ is a functor $J:\catfont{A}\to\catfont{A}'$ with $F'=JF$.
\end{definition}

We denote by $\RAdj(\catfont{X})$ the category of adjunctions over $\catfont{X}$ and right morphisms of adjunctions over $\catfont{X}$, and $\LAdj(\catfont{X})$ denotes the category of adjunctions over $\catfont{X}$ and left morphisms of adjunctions over $\catfont{X}$. 

\begin{remark}
Note that we do \emph{not} require $F'=JF$ in the definition of a right morphism of adjunctions; however, there is a canonical natural transformation $\kappa:F'\to JF$ defined as the composite
\[
F'\xrightarrow{F'\eta} F'GF=F'G'JF\xrightarrow{\eps_{JF}}JF,
\]
and for a left morphism of adjunctions $J$ we have a canonical natural transformation $\iota:G\to G'J$ defined as the composite
\[
G\xrightarrow{\eta'_G}G'F'G=G'JFG\xrightarrow{G'J\eps}G'J.
\]
\end{remark}

\begin{remark}\label{rem:FixedSubcat}
An adjunction is an \df{equivalence} whenever both the unit and the co-unit are natural isomorphisms. Every adjunction $\adjunction$ induces an equivalence between the (possibly empty) full subcategories
\begin{align*}
\Fix(\catfont{X})=\{X\in\catfont{X}\mid \eta_X\text{ is an isomorphism}\} &&\text{and}&&
\Fix(\catfont{A})=\{A\in\catfont{A}\mid \eps_A\text{ is an isomorphism}\}.
\end{align*}
\end{remark}

\begin{definition}
A \df{monad} $\mT=\monad$ on a category $\catfont{X}$ consists of a functor $T:\catfont{X}\to\catfont{X}$ together with natural transformations $e:1_\catfont{X}\to T$ (unit) and $m:TT\to T$ (multiplication) such that the diagrams
\begin{align*}
\xymatrix{T^3\ar[r]^{m_T}\ar[d]_{Tm} & T^2\ar[d]^m\\ T^2\ar[r]_m & T}
&&  &&
\xymatrix{T\ar[r]^{e_T}\ar[rd]_{1_T} & T^2\ar[d]^m & T\ar[l]_{Te}\ar[ld]^{1_T}\\ & T}
\end{align*}
commute. For monads $\mT,\mT'$ on a category $\catfont{X}$, a \df{monad morphism} $j:\mT\to\mT'$ is a natural transformation $j:T\to T'$ such that the diagrams
\begin{align*}
\xymatrix{ &1\ar[dl]_e\ar[dr]^{e'}\\ T\ar[rr]_j & & T'}
&&  &&
\xymatrix{TT\ar[r]^{j^2}\ar[d]_m & T'T'\ar[d]^{m'}\\ T\ar[r]_j & T'}
\end{align*}
commute, where $j^2=j_{T'}\cdot Tj=T'j\cdot j_T$.
\end{definition}
The category of monads on $\catfont{X}$ and monad morphisms is denoted by $\Mon(\catfont{X})$.

\begin{examples}
 \begin{enumerate}
\item The \df{identity monad} $\mI=\imonad$. Trivially, the identity functor $1:\catfont{X}\to\catfont{X}$ together with the identity transformation $1:1\to 1$ forms a monad. It is the initial monad: for every monad $\mT=\monad$ on $\catfont{X}$, the unit $e$ is the unique monad morphism $\mI\to\mT$.
\item The \df{powerset monad} $\mP=\pmonad$ on $\SET$. The powerset functor $P:\SET\to\SET$ sends each set to its powerset $PX$ and each function $f:X\to Y$ to the direct image function $Pf:PX\to PY,A\mapsto f[A]$. The $X$-component of the natural transformation $e$ respectively $m$ is given by ``taking singletons'' $e_X:X\to PX,x\mapsto \{x\}$ and union $m_X:PPX\to PX,\calA\mapsto\bigcup\calA$.
\item The \df{filter monad} $\mF=\fmonad$ on $\SET$. The filter functor $F:\SET\to\SET$ sends a set $X$ to the set $FX$ of all filters on $X$ and, for $f:X\to Y$, the map $Ff$ sends a filter $\ff$ on $X$ to the filter $\{B\subseteq Y\mid f^{-1}[B]\in\ff\}$ on $Y$. The natural transformations $e:1\to F$ and $m:FF\to F$ are given by 
\begin{align*}
&&e_X(x)&=\doo{x}=\{A\subseteq X\mid x\in A\}&\text{and}&&m_X(\fF)&=\{A\subseteq X\mid A^\#\in\fF\}, 
\end{align*}
for all sets $X$, $\fF\in FFX$ and $x\in X$, where $A^\#=\{\ff\in FX\mid A\in\ff\}$.
\item The \df{filter monad} $\mF=\ftmonad$ on $\TOP$. For a topological space $X$, $F_\tau X$ is the set of all filters on the lattices of opens of $X$, equipped with the topology generated by the sets $U^\#$, for $U\subseteq X$ open. The continuous map $F_\tau f:F_\tau X\to F_\tau Y$, for $f:X\to Y$ in $\TOP$, and the unit and the multiplication are defined as above. For more information we refer to \citep{Esc97}.
\end{enumerate}
In Section \ref{sect:StablyComp} we will describe topological counterparts of the powerset monad.
\end{examples}

In the remainder of this section we will construct adjunctions
\begin{align*}
 \Mon(\catfont{X})^\op\rightleftarrows\RAdj(\catfont{X}) &&\text{and}&& \LAdj(\catfont{X})\rightleftarrows\Mon(\catfont{X})
\end{align*}
and identify their fixed objects. We explain how these results can be used to establish equivalences of categories.

Every adjunction $\adjunction$ over $\catfont{X}$ induces a monad $\mT=\monad$ on $\catfont{X}$ defined by
\begin{align*}
T=GF,&& e=\eta && \text{and} && m=G\eps_F.
\end{align*}
Moreover, every right morphism $J$ of adjunctions $\adjunction$ and $\adjunctionb$ over $\catfont{X}$ induces a monad morphism
\[j=G'\kappa:\mT'\to\mT\]
between the induced monads. These constructions define the object and the morphism part of the functor $M^\catfont{X}:\RAdj(\catfont{X})\to\Mon(\catfont{X})^\op$. Likewise, every left morphism $J$ of adjunctions induces a monad morphism
\[j=\iota_F:\mT\to\mT'\]
and we obtain a functor $M_\catfont{X}:\LAdj(\catfont{X})\to\Mon(\catfont{X})$. As we show next, both functors have adjoints given by well-known constructions.

Let $\mT=\monad$ be a monad on a category $\catfont{X}$. A \df{$\mT$-algebra} (also called \df{Eilenberg-Moore algebra}) is a pair $(X,\alpha)$ consisting of an $\catfont{X}$-object $X$ and an $\catfont{X}$-morphism $\alpha:TX\to X$ making the diagrams
\begin{align*}
\xymatrix{X\ar[r]^{e_X}\ar@{=}[dr]_{1_X} & TX\ar[d]^\alpha\\ & X}
&&  &&
\xymatrix{TTX\ar[r]^{m_X}\ar[d]_{T\alpha} & TX\ar[d]^\alpha\\ TX\ar[r]_\alpha & X}
\end{align*}
commutative. Let $(X,\alpha)$ and $(Y,\beta)$ be $\mT$-algebras. A map $f:X\to Y$ is a \df{$\mT$-algebra homomorphism} if the diagram
\begin{align*}
\xymatrix{TX\ar[d]_\alpha\ar[r]^{Tf} & TY\ar[d]^\beta\\ X\ar[r]_f & Y}
\end{align*}
commutes. The category of $\mT$-algebras and $\mT$-algebra homomorphisms is denoted by $\catfont{X}^\mT$. There is a canonical forgetful functor $G^\mT:\catfont{X}^\mT\to\catfont{X},(X,\alpha)\mapsto X$ with left adjoint $F^\mT:\catfont{X}\to\catfont{X}^\mT,X\mapsto (TX,m_X)$. Moreover, every monad morphism $j:\mT\to\mT'$ induces a functor
\[
\catfont{X}^j:\catfont{X}^{\mT'}\to\catfont{X}^\mT, (X,\alpha)\mapsto(X,\alpha\cdot j_X)
\]
with $G^\mT \catfont{X}^j=G^{\mT'}$, that is, $\catfont{X}^j:(F^{\mT'}\dashv G^{\mT'})\to(F^\mT\dashv G^\mT)$ is a right morphism of adjunctions over $\catfont{X}$. These constructions define indeed a functor
\[\EMX:\Mon(\catfont{X})^\op\to\RAdj(\catfont{X}).\]
It is easy to see that $F^\mT\dashv G^\mT$ induces $\mT$, that is, $\mT=M^\catfont{X}\cdot\EMX(\mT)$. On the other hand, for every adjunction $\adjunction$ over $\catfont{X}$ (with induced monad $\mT$) we have a canonical \df{comparison functor} $K:\catfont{A}\to\catfont{X}^\mT$ defined by $K(A)=(GA,G\eps_A)$ and $Kf=Gf$; hence,
\[K:(\adjunction)\to(\adjunctionT)\]
is a right morphism of adjunctions over $\catfont{X}$. For a right morphism $J$ of adjunctions $\adjunction$ and $\adjunctionb$ over $\catfont{X}$, the diagram
\[
  \xymatrix{\catfont{A}\ar[r]\ar[d]_J & \catfont{X}^\mT\ar[d]^{\catfont{X}^j} \\ \catfont{A}'\ar[r] & \catfont{X}^{\mT'}}
\]
commutes, that is, the family of all comparison functors defines a natural transformation $1\to\EMX M^\catfont{X}$. In fact, this transformation together with the family $(\mT=M^\catfont{X}\EMX(\mT))_\mT$ are the units of the adjunction
\begin{equation*}
M^\catfont{X}\dashv \EMX:\Mon(\catfont{X})^\op\rightleftarrows{\RAdj(\catfont{X})}.
\end{equation*}

Clearly, $\Fix(\Mon(\catfont{X}))=\Mon(\catfont{X})$; but, deviating slightly from Remark \ref{rem:FixedSubcat}, we let the \df{fixed} subcategory $\Fix(\RAdj(\catfont{X}))$ consist of those objects whose component of the unit is an equivalence of categories. An object of $\Fix(\RAdj(\catfont{X}))$ is called \df{monadic adjunction}, these adjunctions are characterised by the following

\begin{theorem}[\citep{Bec67}]
An adjunction $\adjunction$ is monadic if and only if $G$ reflects isomorphisms and $\catfont{A}$ has and $G$ preserves all $G$-contractible coequaliser pairs. 
\end{theorem}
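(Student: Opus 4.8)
The adjunction is monadic precisely when the comparison functor
\[
K:\catfont{A}\to\catfont{X}^\mT,\quad K(A)=(GA,G\eps_A),
\]
is an equivalence of categories. The plan is to construct, under the stated hypotheses, a left adjoint $L$ to $K$ out of the coequaliser assumption, and then to check that the unit and co-unit of $L\dashv K$ are isomorphisms; for the converse I would simply transport the standard properties of the forgetful functor $G^\mT$ along the equivalence $K$.

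For the substantive direction I would fix a $\mT$-algebra $(X,\alpha)$ and form the parallel pair
\[
FTX=FGFX\;\rightrightarrows\;FX
\]
given by $\eps_{FX}$ and $F\alpha$. First I would observe that applying $G$ yields the pair $(m_X,T\alpha):TTX\rightrightarrows TX$, which carries the split coequaliser $\alpha:TX\to X$ with splitting morphisms $e_{TX}$ and $e_X$ (this is precisely the algebra's canonical presentation). Hence the pair is $G$-contractible, so by hypothesis $\catfont{A}$ has a coequaliser $q:FX\to L(X,\alpha)$ of it and $G$ preserves $q$. A routine transposition along $F\dashv G$ identifies cocones on this pair with $\mT$-algebra homomorphisms $(X,\alpha)\to K(A)$, which yields the adjunction $L\dashv K$; and since split coequalisers are absolute one obtains $GL(X,\alpha)\cong X$ with $G\eps_{L(X,\alpha)}$ corresponding to $\alpha$, so that the unit $(X,\alpha)\to KL(X,\alpha)$ is the isomorphism just produced.

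The crux is the co-unit $\lambda_A:LK(A)\to A$. Here $LK(A)$ is the coequaliser of the pair $(\eps_{FGA},FG\eps_A):FGFGA\rightrightarrows FGA$, and $\eps_A:FGA\to A$ is a cocone for this pair by naturality of $\eps$, thereby inducing $\lambda_A$. Applying $G$ and using that $G$ preserves the coequaliser, I would check that $G\lambda_A$ is invertible, because $G\eps_A$ is exactly the split coequaliser of the $G$-image of the pair, so the two coequalisers $Gq$ and $G\eps_A$ of the same diagram in $\catfont{X}$ differ by a canonical isomorphism through which $G\lambda_A$ factors. Invoking that $G$ \emph{reflects} isomorphisms then upgrades this to invertibility of $\lambda_A$ in $\catfont{A}$, naturally in $A$. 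This reflection step is where I expect the main difficulty to sit: preservation of the coequaliser delivers invertibility only after applying $G$, and nothing short of the reflection hypothesis returns it to $\catfont{A}$.

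For the converse, assuming $K$ is an equivalence, I would recall that the forgetful functor $G^\mT:\catfont{X}^\mT\to\catfont{X}$ reflects isomorphisms---an algebra homomorphism whose underlying $\catfont{X}$-morphism is invertible has an inverse that is again a homomorphism---and that it creates, hence in particular has and preserves, coequalisers of $G^\mT$-contractible pairs, the latter by the familiar construction of such coequalisers from the corresponding split coequalisers in $\catfont{X}$. Since $G=G^\mT K$ with $K$ an equivalence, both properties pass verbatim to $G$.
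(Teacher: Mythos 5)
The paper does not actually prove this statement: it is quoted as Beck's monadicity theorem with a citation to \citep{Bec67}, so there is no in-paper argument to compare against. Your proposal is a correct sketch of the standard proof of that theorem (and of the characterisation of ``monadic'' used in the paper, namely that the comparison functor $K$ is an equivalence): you build the left adjoint $L$ to $K$ from coequalisers of the $G$-contractible pairs $(\eps_{FX},F\alpha):FGFX\rightrightarrows FX$ given by the canonical presentations of algebras, use preservation of these coequalisers to invert the unit, use preservation together with reflection of isomorphisms to invert the co-unit $\lambda_A$, and transport the standard properties of $G^\mT$ (reflecting isomorphisms, creating coequalisers of contractible pairs) along the equivalence $K$ for the converse. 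The steps you defer as routine --- the transposition bijection establishing $L\dashv K$, the check that the comparison isomorphisms are algebra homomorphisms, and naturality of $\lambda$ --- are indeed routine and go through.
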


\begin{example}\label{ex:MonadicCompHaus}
The canonical forgetful functor $|-|:\COMPHAUS\to\SET$ from the category of compact Hausdorff spaces and continuous maps has a left adjoint given by \v{C}ech--Stone compactification, and it is shown in \citep{Man69} that this adjunction is monadic. The induced monad on $\SET$ is the ultrafilter monad.
\end{example}

The ``equivalence'' between monads and monadic adjunction provides a general principle to prove equivalence of two categories: firstly, show that both categories are part of a monadic adjunction over the same category $\catfont{X}$; and secondly, show that this adjunctions induce isomorphic monads. This idea was used in \citep{Neg71} to obtain the classical duality theorems of Gelfand and Pontrjagin.

We will explain now how $M_\catfont{X}:\LAdj(\catfont{X})\to\Mon(\catfont{X})$ is part of an adjunction. Let $\mT=\monad$ be a monad over $\catfont{X}$. The \df{Kleisli category} $\catfont{X}_\mT$ has the same objects as $\catfont{X}$, and a morphism $f:X\kto Y$ in $\catfont{X}_\mT$ is an $\catfont{X}$-morphism $f:X\to TY$. Given morphisms $f:X\kto Y$ and $g:Y\kto Z$ in $\catfont{X}_\mT$, the composite $g\cdot f:X\kto Z$ is defined as
\[
X\xrightarrow{\;f\;}TY\xrightarrow{Tg}TTZ\xrightarrow{m_Z}TZ.
\]
Then $e_X:X\to TX$ is the identity on $X$ in $\catfont{X}_\mT$. We have a canonical adjunction $F_\mT\dashv G_\mT:\catfont{X}_\mT\rightleftarrows\catfont{X}$, where
\begin{align*}
G_\mT:\catfont{X}_\mT \to \catfont{X},\;\;&
f:X\kto Y \mapsto TX\xrightarrow{Tf}TTY\xrightarrow{m_Y}TY\\
\intertext{and}
F_\mT:\catfont{X} \to \catfont{X}_\mT,\;\;&
f:X\to Y \mapsto X\xrightarrow{f}Y\xrightarrow{e_Y} TY.
\end{align*}

\begin{example}
For the powerset monad $\mP$ on $\SET$, the category $\SET_\mP$ is equivalent to the category $\REL$ of sets and relations by interpreting a map $f:X\to PY$ as a relation $X\relto Y$ from $X$ to $Y$. Under this equivalence, $F_\mP:\SET\to\SET_\mP$ corresponds to the inclusion functor $\SET\to\REL$ and $G_\mP:\SET_\mP\to\SET$ corresponds to the functor $\REL\to\SET$ which sends a set $X$ to its powerset $PX$ and a relation $r:X\relto Y$ to the map $PX\to PY,\,A\mapsto r[A]$.
\end{example}

Every monad morphism $j:\mT\to\mT'$ induces a functor $\catfont{X}_j:\catfont{X}_\mT\to\catfont{X}_{\mT'}$ which acts as the identity on objects and sends $f:X\kto Y$ to $X\xrightarrow{f}TY\xrightarrow{j_Y}T'Y$. One clearly has $F_{\mT'}=\catfont{X}_j F_\mT$, hence $\catfont{X}_j$ is a left morphism of adjunctions and we obtain a functor 
\[\KlX:\Mon(\catfont{X})\to\LAdj(\catfont{X}).\] 
As before, the induced monad of $F_\mT\dashv G_\mT$ is $\mT$ again, that is, $\mT=M_\catfont{X}\cdot\KlX(\mT)$. For every adjunction $\adjunction$ over $\catfont{X}$ (with induced monad $\mT$) we have a canonical \df{comparison functor} $C:\catfont{X}_\mT\to\catfont{A}$ sending an object $X$ in $\catfont{X}_\mT$ to $FX$ and a morphism $f:X\kto Y$ to $FX\xrightarrow{Ff}FTY=FGFY\xrightarrow{\eps_{FY}} FY$. Since $F=C F_\mT$, $C$ is a left morphism of adjunctions. For a left morphism $J$ of adjunctions $\adjunction$ and $\adjunctionb$ over $\catfont{X}$, the diagram
\begin{equation}\label{diag:NatUpEq2}
  \xymatrix{\catfont{X}_\mT\ar[r]\ar[d]_{\catfont{X}j} & \catfont{A}\ar[d]^J \\ \catfont{X}_{\mT'}\ar[r] & \catfont{A}' }
\end{equation}
commutes. Therefore $C$ is the ($\adjunction$)-component of a natural transformation $\KlX M_\catfont{X}\to 1$, in fact, this transformation is the co-unit of the adjunction
\[
\KlX\dashv M_\catfont{X}:\LAdj(\catfont{X})\rightleftarrows\Mon(\catfont{X}),
\]
where the unit $1\to M_\catfont{X}\KlX$ is given by $(\mT=M_\catfont{X}\KlX(\mT))_\mT$. The comparison functor $C:\catfont{X}_\mT\to\catfont{A}$ is always fully faithful, and we call an adjunction $\adjunction$ a \df{Kleisli adjunction} whenever $C$ is an equivalence. Unlike the situation for monadic adjunctions, Kleisli adjunctions can be easily characterised.

\begin{theorem}
An adjunction $\adjunction$ is a Kleisli adjunction if and only if $F$ is essentially surjective on objects.
\end{theorem}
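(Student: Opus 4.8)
The plan is to reduce the statement to the elementary fact that a fully faithful functor is an equivalence if and only if it is essentially surjective on objects, and then simply to read off the object part of the comparison functor $C:\catfont{X}_\mT\to\catfont{A}$. By definition, $\adjunction$ is a Kleisli adjunction precisely when $C$ is an equivalence, and the comparison functor $C$ is always fully faithful, as recorded immediately before the statement. Hence $C$ is an equivalence if and only if it is essentially surjective on objects, and the whole theorem will follow once I identify this surjectivity condition with essential surjectivity of $F$.

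Next I would examine how $C$ acts on objects. The Kleisli category $\catfont{X}_\mT$ has the same objects as $\catfont{X}$, the functor $F_\mT:\catfont{X}\to\catfont{X}_\mT$ is the identity on objects, and $C$ satisfies $F=C F_\mT$; consequently $CX=FX$ for every object $X$. Therefore the essential image of $C$ — the class of $\catfont{A}$-objects isomorphic to some $CX$ — coincides with the essential image of $F$. It follows that $C$ is essentially surjective on objects if and only if $F$ is essentially surjective on objects, and combining this with the previous paragraph delivers both implications of the theorem simultaneously.

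For completeness I would indicate why $C$ is fully faithful, even though this is already granted in the excerpt. On hom-sets the point is that $C$ realises the adjunction bijection
\[
\catfont{X}_\mT(X,Y)=\catfont{X}(X,TY)=\catfont{X}(X,GFY)\cong\catfont{A}(FX,FY),
\]
and a direct check shows that the composite $FX\xrightarrow{Ff}FTY=FGFY\xrightarrow{\eps_{FY}}FY$ defining $Cf$ is exactly the transpose of $f:X\to GFY$ under $F\dashv G$. Since transposition is a natural bijection, $C$ is fully faithful. The only step requiring any care is matching the defining formula for $Cf$ with this adjunction isomorphism; but as full faithfulness of $C$ is already available, the remainder of the argument is purely formal, and I expect no genuine obstacle.
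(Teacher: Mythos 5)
Your proposal is correct and is essentially the argument the paper has in mind: the paper states the theorem without proof, having just recorded that the comparison functor $C$ is always fully faithful, and your reduction---$C$ is an equivalence iff it is essentially surjective, and since $CX=FX$ on objects the essential image of $C$ coincides with that of $F$---is exactly the intended one-line justification. Your verification that $Cf$ is the adjunction transpose of $f:X\to GFY$, which gives full faithfulness of $C$, is also accurate.
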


As for monadic adjuntions, \eqref{diag:NatUpEq2} gives a simple scheme to obtain an equivalence between categories $\catfont{A}$ and $\catfont{A}'$:
\begin{theorem}\label{thm:KleisiEq}
A functor $J:\catfont{A}\to\catfont{A}'$ between Kleisli adjunctions $\adjunction$ and $\adjunctionb$ is an equivalence provided that $F'=JF$ and the morphism $M_\catfont{X}(J)$ between the induced monads is a natural isomorphism.
\end{theorem}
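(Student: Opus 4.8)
The plan is to reduce the statement to the commutative square \eqref{diag:NatUpEq2}, which already expresses $J$ in terms of the comparison functors. Write $C\colon\catfont{X}_\mT\to\catfont{A}$ and $C'\colon\catfont{X}_{\mT'}\to\catfont{A}'$ for the comparison functors of the two adjunctions and set $j=M_\catfont{X}(J)\colon\mT\to\mT'$ for the induced monad morphism. Since $F'=JF$, the functor $J$ is a left morphism of adjunctions, so \eqref{diag:NatUpEq2} specialises to the identity $J\cdot C=C'\cdot\catfont{X}_j$, where $\catfont{X}_j=\KlX(j)$. I would then argue that all three functors $C$, $C'$ and $\catfont{X}_j$ are equivalences and cancel $C$ to conclude that $J$ is one.

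For the first two I would simply invoke the hypothesis: both $\adjunction$ and $\adjunctionb$ are Kleisli adjunctions, which by definition means precisely that $C$ and $C'$ are equivalences. For $\catfont{X}_j$ I would use that $j$ is a natural isomorphism. Recall that $\catfont{X}_j$ is the identity on objects and sends a Kleisli morphism $f\colon X\to TY$ to $j_Y\cdot f\colon X\to T'Y$. As each component $j_Y$ is an isomorphism in $\catfont{X}$, post-composition with $j_Y$ is a bijection $\catfont{X}(X,TY)\to\catfont{X}(X,T'Y)$, so $\catfont{X}_j$ is bijective on objects and fully faithful, hence an isomorphism of categories; its strict inverse is $\catfont{X}_{j^{-1}}$, reflecting the functoriality of $\KlX$ applied to the inverse monad morphism $j^{-1}$.

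To finish, I would pick a quasi-inverse $D$ of the equivalence $C$ with $C\cdot D\cong 1_\catfont{A}$ and compute
\[
J\cong J\cdot C\cdot D=C'\cdot\catfont{X}_j\cdot D,
\]
exhibiting $J$ as naturally isomorphic to a composite of equivalences and therefore an equivalence itself. I expect no serious obstacle here: the one step meriting care is the claim that a monad isomorphism induces an equivalence (in fact an isomorphism) of Kleisli categories, and this is immediate from the explicit description of $\catfont{X}_j$. The remainder is bookkeeping around the already-established commutativity of \eqref{diag:NatUpEq2} together with the definition of a Kleisli adjunction.
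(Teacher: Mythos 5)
Your proof is correct and follows exactly the route the paper intends: the theorem is stated as a direct consequence of the commutative square \eqref{diag:NatUpEq2}, with $C$ and $C'$ equivalences by the definition of Kleisli adjunction and $\catfont{X}_j$ an isomorphism of categories because $j$ is invertible. You have simply made explicit the cancellation step (passing to a quasi-inverse of $C$) that the paper leaves to the reader.
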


We will illustrate this principle with various examples in Section \ref{sect:KleisliDual}.

\section{Stably compact spaces, spectral spaces, and Stone spaces}\label{sect:StablyComp}

Our main examples in the following sections involve Stone spaces and spectral spaces as well as the (lower) Vietoris monad. We recall that these spaces where introduced by M.H.\ Stone in \citep{Sto36,Sto38} in order to give representation theorems for Boolean algebras and distributive lattices. Both are examples of stably compact topological spaces, and stably compact spaces can be equivalently described as compact Hausdorff spaces equipped with a compatible order relation. We stress that, in order to describe their properties, it is quite useful to pass freely from one description to the other. In this section we collect a couple of (categorical) properties of stably compact spaces, we will be particularly interested in a description of initial structures. For more information on these types of spaces we refer to \citep{Jun04}, \citep{Law11} and \citep{Tho09}, and on topological functors to \citep{AHS90}. There exists a vast literature on the (lower) Vietoris construction, see \citep{Vie22}, \citep {Pop66}, \citep{CT97} and \citep{KKV04}, for instance. A study of stably compact spaces and the Vietoris monad in a more general context can be found in \citep{Hof12a}.

For a topological space $X$, we consider its \df{underlying order} defined by $x\le y$ whenever $y\in\overline{\{x\}}$, or, equivalently, whenever the principal filter $\doo{x}$ converges to $y$. Clearly, every continuous map is monotone with respect to this order. We also note that this order relation is dual to the specialisation order. For continuous maps $f:X\to Y$ and $g:Y\to X$, we say that $f$ is \df{left adjoint} to $g$, written as $f\dashv g$, if $x\le g(f(x))$ and $f(g(y))\le y$, for all $x\in X$ and $y\in Y$. Given $f$, there exists up to equivalence at most one such $g$, and in this case we call $f$ a left adjoint continuous map.

An \df{ordered compact Hausdorff space} (see \citep{Nac50}) is a triple $(X,\le,\tau)$ where $(X,\le)$ is an ordered set and $\tau$ is a compact Hausdorff topology on $X$ so that $\{(x,y)\mid x\le y\}$ is closed in $X\times X$. We follow here \citep{Tho09} and do not assume the order relation $\le$ on $X$ to be anti-symmetric. In the sequel we write $\ORDCH$ for the category of ordered compact Hausdorff spaces and maps preserving both the order and the topology, and denote its full subcategory defined by the objects with anti-symmetric order as $\ORDCH_\sep$. As shown in \citep{Tho09}, $\ORDCH$ is the Eilenberg--Moore category for the ultrafilter monad on the category $\ORD$ of (not necessarily anti-symmetric) ordered sets and monotone maps. Therefore $\ORDCH$ is complete and the forgetful functor $|-|:\ORDCH\to\ORD$ preserves limits. Given an ordered compact Hausdorff space  $(X,\le,\tau)$, one defines a new topology $\downc\tau$ on $X$ which contains precisely those elements of $\tau$ which are down-closed, and this construction defines a functor $K:\ORDCH\to\TOP$. Note that the underlying order of $KX$ is precisely the order relation of $X$. We obtain a commuting diagram of functors
\[
 \xymatrix{\ORDCH\ar[d]_{|-|}\ar[r]^-{K} & \TOP\ar[d]^{|-|}\\ \COMPHAUS\ar[r]_-{|-|} & \SET}
\]
where both vertical arrows represent topological functors (see \citep{Tho09}) and both horizontal arrows monadic functors (see Example \ref{ex:MonadicCompHaus} and \citep{Sim82}, the latter only considers anti-symmetric orders; however, the general case is similar as pointed out in \citep{Hof13}). Hence, $\ORDCH$ is also cocomplete and the topological functor $|-|:\ORDCH\to\COMPHAUS$ has a fully faithful right adjoint and a fully faithful left adjoint $\COMPHAUS\hrw\ORDCH$; the latter equips a compact Hausdorff space with the discrete order. This functor clearly corestricts to $\COMPHAUS\hrw\ORDCH_\sep$ and this corestriction is left adjoint to the forgetful functor $|-|:\ORDCH_\sep\to\COMPHAUS$.

\begin{proposition}\label{prop:InitialSources}
Let $(f_i:X\to X_i)_{i\in I}$ be a source in $\ORDCH$. Then $(f_i:X\to X_i)_{i\in I}$ is initial with respect to $|-|:\ORDCH\to\COMPHAUS$ if and only if $(Kf_i:KX\to KX_i)_{i\in I}$ is initial with respect to $|-|:\TOP\to\SET$.
\end{proposition}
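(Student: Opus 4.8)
The plan is to exploit that both forgetful functors are topological, so that each notion of initiality reduces to a concrete statement about a single structure on the set $X$. Write $V\colon\ORDCH\to\COMPHAUS$ and $W\colon\TOP\to\SET$ for the two forgetful functors. Since $V$ only forgets the order, the source $(f_i)$ is $V$-initial exactly when the order of $X$ is the initial order, i.e.\ $x\le y$ iff $f_i(x)\le f_i(y)$ for all $i$. For $W$, initiality of $(Kf_i)$ means that $\downc\tau$ coincides with the initial topology $\sigma$ on the set $X$ generated by the sets $(Kf_i)^{-1}(V)$ with $V$ open in $KX_i$. I would first record the bridge between the two: the underlying order of $(X,\sigma)$ is precisely the initial order, since $x\le_\sigma y$ means that $f_i(x)$ lies in every open neighbourhood of $f_i(y)$ for each $i$, which by the definition of the underlying order is exactly $f_i(x)\le f_i(y)$. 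Combined with the fact, noted before the statement, that the underlying order of $KX$ equals the order of $X$, this identifies the underlying orders of $(X,\sigma)$ and of $KX$ with the initial order and the given order of $X$.

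For the implication $W$-initial $\Rightarrow$ $V$-initial I would argue directly, with no use of compactness. Assuming $\downc\tau=\sigma$, the underlying order of $KX$ equals that of $(X,\sigma)$, so by the bridge the order of $X$ is the initial order, which is $V$-initiality. Concretely, if $f_i(x)\le f_i(y)$ for all $i$, then every subbasic $\sigma$-neighbourhood $f_i^{-1}(V)$ of $y$ has $f_i(y)\in V$ with $V$ down-closed, hence $f_i(x)\in V$; thus $x$ lies in every $\sigma$-neighbourhood of $y$, giving $x\le y$.

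The substantial direction is $V$-initial $\Rightarrow$ $W$-initial, i.e.\ $\downc\tau\subseteq\sigma$ (the inclusion $\sigma\subseteq\downc\tau$ being immediate from continuity of the $Kf_i$). Fix a down-closed open $U$ and a point $x\in U$; then $C=X\setminus U$ is $\tau$-compact and up-closed. For each $y\in C$ we have $y\not\le x$, so $V$-initiality yields an index $i$ with $f_i(y)\not\le f_i(x)$ in $X_i$. At this point I would invoke the order-separation of compact ordered spaces to obtain disjoint $\tau_i$-opens, a down-set $V_y\ni f_i(x)$ and an up-set $W_y\ni f_i(y)$. Pulling back, $O_y=f_i^{-1}(V_y)$ is a $\sigma$-open containing $x$ and $N_y=f_i^{-1}(W_y)$ is a $\tau$-open containing $y$, with $O_y\cap N_y=\emptyset$. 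The sets $N_y$ form a $\tau$-open cover of the compact $C$; choosing a finite subcover $N_{y_1},\dots,N_{y_n}$ and setting $O=O_{y_1}\cap\cdots\cap O_{y_n}$ produces a $\sigma$-open neighbourhood of $x$ disjoint from $C$, hence contained in $U$. As $x\in U$ is arbitrary, $U$ is a union of $\sigma$-opens and so $\sigma$-open.

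The main obstacle is precisely this last direction, and within it the separation step: from $f_i(y)\not\le f_i(x)$ one must produce \emph{disjoint} open down- and up-sets in the compact ordered space $X_i$, which is where the order normality of ordered compact Hausdorff spaces (Nachbin's theorem, in the not-necessarily-antisymmetric form) is used. The remaining work—passing from pointwise separation to a single neighbourhood via compactness of $C$, and checking that down- and up-closedness are preserved under the monotone continuous maps $f_i$—is routine.
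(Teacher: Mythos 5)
Your proof is correct, but it is not really comparable to the paper's own argument step by step, because the paper disposes of this proposition in one line: it simply cites the description of the initial lifts of $|-|:\ORDCH\to\COMPHAUS$ from \citep[Proposition 3]{Tho09}. Your opening reduction---that initiality over $\COMPHAUS$ amounts to the order of $X$ being the pointwise initial order---is precisely the ingredient the paper imports from Tholen; note that the nontrivial half of this reduction (initial $\Rightarrow$ the initial order refines the given order) needs the test object $(|X|,\preceq,\tau)$, hence the check that the initial order $\preceq\,=\bigcap_i(f_i\times f_i)^{-1}(\le_i)$ is closed in $X\times X$ and so is an admissible lift. You treat this as automatic from topologicity; since it is a one-line verification, this is a gloss rather than a gap. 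Everything after that point is genuine added content relative to the paper: the bridge identifying the underlying order of the initial topology $\sigma$ with the initial order, the easy implication, and above all the substantive direction, where Nachbin's separation theorem (which does hold for closed preorders, e.g.\ by passing to the compact ordered quotient modulo $\le\cap\ge$, so its use in the not-necessarily-antisymmetric setting is legitimate) together with compactness of $C=X\setminus U$ yields $\downc\tau\subseteq\sigma$. In short, the paper's approach buys brevity by delegating to the literature, while yours makes visible exactly where closedness of the order and compactness enter, and correctly isolates the only hard implication as ``initial over $\COMPHAUS$ implies $K$-image initial over $\SET$''.
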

\begin{proof}
This follows from the description of the initial lifts of \citep[Proposition 3]{Tho09}.
\end{proof}

It is also shown in \citep{Tho09} that $\ORDCH_\sep$ is a quotient-reflective subcategory of $\ORDCH$, and therefore:

\begin{proposition}
$\ORDCH_\sep\hrw\ORDCH$ is closed under initial mono-sources with respect to $|-|:\ORDCH\to\COMPHAUS$. Hence, $\ORDCH_\sep$ is complete and cocomplete with limits formed as in $\ORDCH$. The forgetful functor $|-|:\ORDCH_\sep\to\COMPHAUS$ is mono-topological and therefore every source in $\ORDCH_\sep$ factors as an epimorphism followed by an initial mono-source (with respect to $|-|:\ORDCH_\sep\to\COMPHAUS$). 
\end{proposition}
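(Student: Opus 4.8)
The plan is to deduce all four assertions from two facts already in place: that $|-|\colon\ORDCH\to\COMPHAUS$ is topological and that $\ORDCH$ is complete and cocomplete. Being quotient-reflective, $\ORDCH_\sep$ is in particular epireflective in $\ORDCH$, and for a topological functor the epireflective full subcategories are exactly those closed under initial mono-sources (see \citep{AHS90}); this yields the first claim immediately. For a self-contained check one can instead use that the $|-|$-initial lift of a source $(f_i\colon X\to X_i)_{i\in I}$ carries the order $x\le y$ iff $f_i(x)\le f_i(y)$ for all $i$: if every $X_i$ is separated and the source is a mono-source, then $x\le y\le x$ gives $f_i(x)=f_i(y)$ for all $i$, whence $x=y$, so $X$ is separated.

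For completeness and cocompleteness I would argue formally. A full reflective subcategory of a cocomplete category is cocomplete, its colimits being the reflections of the colimits computed in the ambient category; as $\ORDCH$ is cocomplete, so is $\ORDCH_\sep$. For limits, note that in the topological category $\ORDCH$ the projections of a product form a jointly monic initial source and an equaliser is an initial mono; hence products and equalisers are initial mono-sources, and closure under initial mono-sources forces the limit in $\ORDCH$ of any diagram from $\ORDCH_\sep$ to lie again in $\ORDCH_\sep$. Thus $\ORDCH_\sep$ is complete with limits formed as in $\ORDCH$.

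Mono-topologicity is then a short argument. Given a $|-|$-structured mono-source $(g_i\colon Y\to|A_i|)_{i\in I}$ with all $A_i$ in $\ORDCH_\sep$, its $|-|$-initial lift $(f_i\colon A\to A_i)$ exists in $\ORDCH$ because $|-|$ is topological; since $|-|$ is faithful it reflects mono-sources, so $(f_i)$ is a mono-source, and by the closure just established $A$ belongs to $\ORDCH_\sep$. This is the required unique initial lift inside $\ORDCH_\sep$, so $|-|\colon\ORDCH_\sep\to\COMPHAUS$ is mono-topological.

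The factorisation is where the genuine work sits. I would first record that $\COMPHAUS$ has $(\text{Epi},\text{Mono-Source})$-factorisations: a source $(h_i\colon Z\to Z_i)$ factors through the quotient of $Z$ by $\bigcap_i(h_i\times h_i)^{-1}(\Delta_{Z_i})$, an equivalence relation that is closed since each $Z_i$ is Hausdorff, so the quotient is again compact Hausdorff. Now given a source $(f_i\colon A\to A_i)$ in $\ORDCH_\sep$, factor $(|f_i|)$ as $|A|\xrightarrow{\,e\,}Z\xrightarrow{\,m_i\,}|A_i|$ with $e$ surjective and $(m_i)$ jointly monic, lift $(m_i)$ to an initial mono-source $(\bar m_i\colon B\to A_i)$ in $\ORDCH_\sep$ using mono-topologicity, and read off from the initiality of $(\bar m_i)$ that $e$ underlies an $\ORDCH_\sep$-morphism $\bar e\colon A\to B$ with $\bar m_i\cdot\bar e=f_i$. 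Since $|\bar e|=e$ is surjective and the underlying-set functor on $\ORDCH_\sep$ is faithful, $\bar e$ is an epimorphism, and $A\xrightarrow{\bar e}B\xrightarrow{(\bar m_i)}A_i$ is the desired factorisation. I expect this paragraph to be the main obstacle: establishing the factorisation in $\COMPHAUS$ and checking that the lifted first factor is genuinely epic is the only non-formal part, everything else following from topologicity and reflectivity; alternatively one may invoke directly the general theorem on mono-topological functors and factorisation structures in \citep{AHS90}.
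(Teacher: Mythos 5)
Your proposal is correct and takes essentially the same route as the paper: the paper offers no written proof at all, stating the proposition as an immediate consequence ("and therefore") of Tholen's result that $\ORDCH_\sep$ is quotient-reflective in $\ORDCH$ combined with the standard theory of topological functors from \citep{AHS90}, which is precisely the skeleton you follow. Your contribution is simply to make the implicit steps explicit (epireflectivity gives closure under initial mono-sources, $(\mathrm{Epi},\text{Mono-Source})$-factorisations in $\COMPHAUS$ lift along the topological functor), all of which the paper leaves to the reader.
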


In all categories considered above, a source is a mono-source precisely if it is point-separating.

A topological space $X$ is called \df{stably compact} whenever $X$ is a locally compact, sober, and the compact down-closed subsets are closed under finite intersections. A continuous map $f:X\to Y$ between stably compact spaces is \df{spectral map} whenever $f^{-1}(K)$ is compact for every compact down-closed subset $K\subseteq Y$. A useful criterion is given by the fact that every left adjoint continuous map between stably compact spaces is spectral. The category of stably compact spaces and spectral maps is denoted as $\STCOMP$.

\begin{theorem}\label{thm:OCHvsSTC}
The functor $K:\ORDCH\to\TOP$ restricts to an isomorphism \[K:\ORDCH_\sep\to\STCOMP.\] Hence, $\STCOMP$ is complete and cocomplete and the inclusion functor $\STCOMP\to\TOP$ preserves limits.
\end{theorem}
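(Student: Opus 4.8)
The plan is to construct an explicit inverse functor $L\colon\STCOMP\to\ORDCH_\sep$ and to verify $KL=1$ and $LK=1$ strictly. The decisive simplification is that both $K$ and $L$ act as the identity on underlying sets and on underlying functions; consequently functoriality and the two identities reduce entirely to two bijections on a fixed carrier set: a bijection between the admissible structures (compact Hausdorff topologies with closed order versus stably compact topologies) and a bijection between the two notions of morphism. Once these are established, nothing categorical remains.

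For the object bijection I would first check that $K$ lands in $\STCOMP$: for $(X,\le,\tau)\in\ORDCH_\sep$ the space $KX=(X,\downc\tau)$ is stably compact, with the $T_0$ property coming from antisymmetry and the remaining properties read off from compactness of $\tau$ together with closedness of $\le$. For $L$ I would equip a stably compact space $(X,\sigma)$ with its underlying order (antisymmetric, since $\sigma$ is sober) and with the \emph{patch} topology $\tau$ generated by $\sigma$ together with the complements of the compact down-closed sets, and then prove that $(X,\le,\tau)\in\ORDCH_\sep$ with $\downc\tau=\sigma$. Compactness of $\tau$ follows from the Alexander subbase lemma, using that the compact down-closed sets are closed under finite intersection; Hausdorffness of $\tau$ and closedness of the graph of $\le$ both follow from local compactness, by separating two points, respectively a point from the graph, by a $\sigma$-open on one side and the complement of a compact down-closed neighbourhood on the other. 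I expect the genuine obstacle to sit here, specifically in the identity $\downc\tau=\sigma$: the inclusion $\sigma\subseteq\downc\tau$ is immediate, but showing that every down-closed patch-open set is already $\sigma$-open is where the point-set topology is concentrated. This is exactly the classical correspondence between stably compact and compact ordered spaces (see \citep{Nac50,Jun04,Law11}).

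Because the underlying functions are untouched, the morphism bijection amounts to the claim that a monotone continuous map $f\colon(X,\le,\tau)\to(Y,\le,\sigma)$ of objects of $\ORDCH_\sep$ is precisely a spectral map $Kf$. Continuity of $Kf$ for the down-topologies is immediate from monotonicity and continuity of $f$; spectrality, i.e.\ that $f^{-1}$ of a compact down-closed set is compact, follows since $f$ is continuous between the compact Hausdorff patches. Conversely, a spectral map is automatically monotone for the underlying orders, and it pulls the subbasic patch-opens back to patch-opens --- the $\sigma$-opens by continuity, the complements of compact down-closed sets by spectrality --- hence is a morphism of $\ORDCH_\sep$. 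Together with the object bijection this assembles into the isomorphism $K\colon\ORDCH_\sep\to\STCOMP$.

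Finally, the ``Hence''. Since $\ORDCH_\sep$ is complete and cocomplete with limits formed as in $\ORDCH$, the isomorphism transports both properties to $\STCOMP$. For preservation of limits by $\STCOMP\hrw\TOP$, observe that this inclusion is $K$ restricted to $\ORDCH_\sep$; a limit source in $\ORDCH_\sep$ is an initial source for $|-|\colon\ORDCH\to\COMPHAUS$ lying over a limit in $\COMPHAUS$, so by Proposition~\ref{prop:InitialSources} its image under $K$ is initial for $|-|\colon\TOP\to\SET$, and as $|-|\colon\COMPHAUS\to\SET$ preserves limits this image lies over a limit of underlying sets; an initial source over such a limit is precisely a limit in $\TOP$.
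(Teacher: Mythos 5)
Your proposal is correct, but the comparison with the paper is quickly made: the paper's entire proof of this theorem is the single line ``See \citep{Jun04}, for instance'', so what you have written is a reconstruction of the classical argument behind that citation --- the patch-topology correspondence of \citep{Nac50}, in the form given in \citep{Jun04} and \citep{Law11} --- rather than an alternative to anything proved in the paper. Two remarks. First, your sketch locates the point-set difficulty exclusively in the identity $\downc\tau=\sigma$, i.e.\ in the round trip that starts from a stably compact space; but the other round trip is equally essential and equally nontrivial: starting from $(X,\le,\tau)$ in $\ORDCH_\sep$ you must show that the patch topology of $(X,\downc\tau)$ is again $\tau$, which is Nachbin's separation theorem (every $\tau$-open is a union of finite intersections of open down-sets and open up-sets, the latter being, modulo the further lemma that compact down-closed subsets of $KX$ coincide with the $\tau$-closed down-closed ones, the complements of the compact down-closed sets). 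Your morphism argument in the direction ``spectral $\Rightarrow$ morphism of $\ORDCH_\sep$'' silently uses this as well: pulling subbasic patch-opens back to patch-opens only yields continuity for the two patch topologies, which must then be identified with $\tau$ and $\sigma$. Since both facts are part of the classical correspondence you cite, this is an understatement of what remains to be proved rather than an error, and your argument is in any case no less complete than the paper's. Second, your treatment of the ``Hence'' clause is genuine added value: transporting (co)completeness along the isomorphism and deducing limit-preservation of $\STCOMP\to\TOP$ from Proposition~\ref{prop:InitialSources} (a limit in $\ORDCH_\sep$ is an initial source over a limit in $\COMPHAUS$, hence its $K$-image is an initial source over a limit in $\SET$, hence a limit in $\TOP$) is precisely the argument the paper leaves implicit, and it is correct.
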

\begin{proof}
See \citep{Jun04}, for instance.
\end{proof}

Via this equivalence, $|-|:\ORDCH_\sep\to\COMPHAUS$ corresponds to a mono-topological faithful functor $|-|:\STCOMP\to\COMPHAUS$ with fully faithful left adjoint $\COMPHAUS\hrw\STCOMP$ which is just the inclusion functor.

\begin{proposition}
For a mono-source $(f_i:X\to X_i)_{i\in I}$ in $\STCOMP$, the following assertions are equivalent.
\begin{eqcond}
\item $(f_i:X\to X_i)_{i\in I}$ is initial with respect to $|-|:\STCOMP\to\COMPHAUS$.
\item $(f_i:X\to X_i)_{i\in I}$ is initial with respect to $|-|:\STCOMP\to\SET$.
\item $(f_i:X\to X_i)_{i\in I}$ is initial with respect to $|-|:\TOP\to\SET$.
\end{eqcond}
\end{proposition}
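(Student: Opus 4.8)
The plan is to exploit the isomorphism $K:\ORDCH_\sep\to\STCOMP$ of Theorem \ref{thm:OCHvsSTC} to transport the whole statement into $\ORDCH_\sep$, where a stably compact space appears as an ordered compact Hausdorff space carrying both a compact Hausdorff topology $\tau$ and an order $\le$, and where $|-|:\STCOMP\to\COMPHAUS$ becomes the functor forgetting the order. I would then prove the three implications $(\text{ii})\Rightarrow(\text{i})$, $(\text{i})\Leftrightarrow(\text{iii})$, and $(\text{i})\Rightarrow(\text{ii})$, the last being the substantial one. Throughout I use that in each of the categories involved a mono-source is the same thing as a point-separating source.

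For $(\text{ii})\Rightarrow(\text{i})$ I would appeal to the general transitivity property of initial sources (see \citep{AHS90}): since $|-|:\STCOMP\to\SET$ factors as $\STCOMP\xrightarrow{|-|}\COMPHAUS\xrightarrow{|-|}\SET$ and the second factor is faithful, every source that is initial with respect to the composite is automatically initial with respect to the first factor. Concretely, a lifting problem for $|-|:\STCOMP\to\COMPHAUS$ is pushed down to $\SET$, solved there using $(\text{ii})$, and the resulting spectral map is recognised as the required lift because $|-|:\COMPHAUS\to\SET$ is faithful.

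For $(\text{i})\Leftrightarrow(\text{iii})$ I would first observe that, for a mono-source, initiality with respect to $|-|:\ORDCH_\sep\to\COMPHAUS$ coincides with initiality with respect to $|-|:\ORDCH\to\COMPHAUS$: one direction is immediate since $\ORDCH_\sep$ supplies fewer test objects, and the converse uses that $\ORDCH_\sep\hrw\ORDCH$ is closed under initial mono-sources together with the amnesticity of the topological functor $|-|:\ORDCH\to\COMPHAUS$. Indeed, the $\ORDCH$-initial lift of the underlying $\COMPHAUS$-source is a mono-source, hence already lies in $\ORDCH_\sep$, and by uniqueness of initial lifts it must agree with the given structure. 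Transporting along $K$ and applying Proposition \ref{prop:InitialSources} then yields $(\text{i})\Leftrightarrow(\text{iii})$ directly.

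The main step is $(\text{i})\Rightarrow(\text{ii})$. Given $W\in\STCOMP$ and a function $h\colon|W|\to|X|$ with every $f_i\cdot h$ spectral, I must show that $h$ itself is spectral. Since $(\text{i})$ is equivalent to $(\text{iii})$, the topology of $X$ is initial in $\TOP$, so $h$ is continuous, and being a continuous map between stably compact spaces it is monotone for the underlying orders. Viewing $W$ and $X$ inside $\ORDCH_\sep$, it then remains to check that $h$ is also continuous for the compact Hausdorff topologies, and this is exactly where $(\text{i})$ is used: taking $(W,\tau_W)$ as a test object for the initial $\COMPHAUS$-source $(f_i)$, and noting that each $f_i\cdot h$, being spectral, is continuous for the compact Hausdorff topologies, continuity of $h$ for $\tau$ follows. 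Thus $h$ is simultaneously monotone and $\tau$-continuous, i.e. a morphism of $\ORDCH_\sep$, which under $K$ is precisely a spectral map. The crux — and the only genuinely non-formal point — is this splitting of spectrality into an ``order and stably compact topology'' part, supplied by $(\text{iii})$, and a ``compact Hausdorff topology'' part, supplied by $(\text{i})$; once the characterisation of spectral maps as morphisms of $\ORDCH_\sep$ is in hand, the two initiality hypotheses feed exactly the two halves.
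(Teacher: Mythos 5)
Your implications (ii)$\Rightarrow$(i) and (i)$\Leftrightarrow$(iii) are correct and essentially match the paper's argument: the former is the cancellation property of initial sources along the faithful functor $|-|:\COMPHAUS\to\SET$, and the latter is transport along $K$ (Theorem \ref{thm:OCHvsSTC}) combined with Proposition \ref{prop:InitialSources}. Your explicit treatment of the passage between $|-|:\ORDCH_\sep\to\COMPHAUS$ and $|-|:\ORDCH\to\COMPHAUS$ (closure of $\ORDCH_\sep$ under initial mono-sources, plus amnesticity and uniqueness of initial lifts) is a point the paper leaves implicit, and you handle it correctly.

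The step (i)$\Rightarrow$(ii), however, has a genuine gap. Initiality of $(f_i)$ with respect to $|-|:\STCOMP\to\COMPHAUS$ is a lifting property for \emph{$\COMPHAUS$-morphisms} into $|X|$: to use $W$ as a test object you must already have a map $|W|\to|X|$ that is continuous for the patch topologies and whose composites with the $|f_i|$ lift to spectral maps. But patch-continuity of $h$ is exactly what you are trying to establish at that moment, so the appeal to (i) is circular. What your argument actually needs is a different statement, namely that the underlying source $(|f_i|:|X|\to|X_i|)$ in $\COMPHAUS$ is initial with respect to $|-|:\COMPHAUS\to\SET$; this does not follow formally from (i) (initiality with respect to one functor does not in general imply initiality of the image source with respect to a further functor), and it is precisely the special fact about compact Hausdorff spaces that the paper invokes: every mono-source in $\COMPHAUS$ is initial with respect to $|-|:\COMPHAUS\to\SET$ (a point-separating source from a compact space into Hausdorff spaces induces a closed embedding into the product). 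Note that your justification of this step nowhere uses the mono-source hypothesis --- a warning sign, since the implication fails without it. Once this $\COMPHAUS$ fact is in hand, (i)$\Rightarrow$(ii) follows at once from the composition property of initial sources ($(f_i)$ initial over $\COMPHAUS$ and $(|f_i|)$ initial over $\SET$ give $(f_i)$ initial over $\SET$), with no need for the detour through $\ORDCH_\sep$, monotonicity, and (iii); this is the paper's proof, and the non-formal core of the proposition lies there rather than in the splitting of spectrality you describe.
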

\begin{proof}
The equivalence (i)$\RLw$(iii) follows from Theorem \ref{thm:OCHvsSTC} and Proposition \ref{prop:InitialSources}, and (i)$\RLw$(ii) follows from the fact that every monosource in $\COMPHAUS$ is initial with respect to $|-|:\COMPHAUS\to\SET$.
\end{proof}

In the sequel we call a mono-source in $\STCOMP$ initial if it is initial with respect to any of the above-mentioned forgetful functors, and similarly for mono-sources in $\ORDCH_\sep$. A stably compact space $X$ is called \df{spectral} whenever the source $(X\to 2)$ of all spectral maps from $X$ into the Sierpi\'nski space $2=\{0,1\}$ (with $\{1\}$ open) is point-separating and initial, that is, the compact open subsets of $X$ form a basis for the topology. Correspondingly, an anti-symmetric ordered compact Hausdorff space $X$ is a \df{Priestley space} (see \cite{Pri70,Pri72}) whenever the source $(X\to 2)$ of all continuous monotone maps from $X$ into $2=\{0\le 1\}$ (with the discrete topology) is point-separating and initial. We let $\SPEC$ denote the category of spectral spaces and spectral maps. For each object $X$ in $\STCOMP$, the (Epi,initial mono-source)-factorisation of $(X\to 2)$ in $\STCOMP$ provides a $\SPEC$-reflection of $X$, therefore $\SPEC$ is a reflective subcategory of $\STCOMP$. A compact Hausdorff space $X$ is a \df{Stone space} whenever the source $(X\to 2)$ of all continuous maps from $X$ into the discrete space $2=\{0,1\}$ is point-separating (and hence initial with respect to $|-|:\COMPHAUS\to\SET$), which amounts to saying that the simultaneously open and closed subsets of $X$ form a basis for the topology. Hence, a topological space $X$ is a Stone space if and only if $X$ is spectral and Hausdorff. By definition, the functor $|-|:\STCOMP\to\COMPHAUS$ restricts to $|-|:\SPEC\to\STONE$ which has the inclusion functor $\STONE\hrw\SPEC$ as a left adjoint.

The \df{lower Vietoris monad} $\mV=\vmonad$ on $\TOP$ consists of the functor $V:\TOP\to\TOP$ which sends a topological space $X$ to the space
\[
 VX=\{A\subseteq X\mid A\text{ is closed}\}
\]
with the topology generated by the sets
\[
 U^\Diamond=\{A\in VX\mid A\cap U\neq\varnothing\}\hspace{2em}(\text{$U\subseteq X$ open}),
\]
and $Vf:VX\to VY$ sends $A$ to $\overline{f[A]}$, for $f:X\to Y$ in $\TOP$; and the unit $e$ and the multiplication $m$ of $\mV$ are given by
\begin{align*}
 e_X:X\to VX,\,x\mapsto\overline{\{x\}} &&\text{and}&& m_X:VVX\to VX,\,\calA\mapsto\bigcup\calA
\end{align*}
respectively. Since $\bigcup_{i\in I}U_i^\Diamond=\left(\bigcup_{i\in I}U_i\right)^\Diamond$ it is enough to consider basic opens of $X$ in the definition of the topology of $VX$. The underlying order of $VX$ is the opposite of subset inclusion, that is, $A\le B$ if and only if $A\supseteq B$, for all $A,B\in VX$. Also note that, for all $U\subseteq X$ open and $\calA\subseteq VX$ closed,
\begin{align*}
 e_X^{-1}(U^\Diamond)&=U, & \bigcup\calA&=e_X^{-1}(\calA), & m_X^{-1}(U^\Diamond)&=(U^\Diamond)^\Diamond.
\end{align*}

\begin{lemma}\label{lem:VvsSum}
For topological spaces $X_1$ and $X_2$, $V(X_1+X_2)\simeq VX_1\times VX_2$.
\end{lemma}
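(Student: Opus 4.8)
The plan is to exhibit an explicit homeomorphism and verify it by matching sub-bases. First I would use that both summands are simultaneously open and closed in the coproduct $X_1+X_2$: a subset $A\subseteq X_1+X_2$ is closed if and only if $A_1:=A\cap X_1$ is closed in $X_1$ and $A_2:=A\cap X_2$ is closed in $X_2$, and in that case $A=A_1+A_2$. This yields a bijection
\[
\Phi\colon V(X_1+X_2)\to VX_1\times VX_2,\qquad A\mapsto (A\cap X_1,\,A\cap X_2),
\]
with inverse $(A_1,A_2)\mapsto A_1+A_2$. The content of the lemma is that $\Phi$ is moreover a homeomorphism.

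Next I would reduce the generating opens of $V(X_1+X_2)$ to a convenient sub-basis. Every open of $X_1+X_2$ has the form $U_1+U_2$ with $U_i\subseteq X_i$ open, and by the identity $\bigcup_{i}U_i^\Diamond=\bigl(\bigcup_i U_i\bigr)^\Diamond$ recalled above we get $(U_1+U_2)^\Diamond=U_1^\Diamond\cup U_2^\Diamond$, where here $U_1,U_2$ are regarded as opens of the coproduct. Hence the sets $U_1^\Diamond$ (for $U_1\subseteq X_1$ open) together with the sets $U_2^\Diamond$ (for $U_2\subseteq X_2$ open) already form a sub-basis for the topology of $V(X_1+X_2)$.

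Then I would compute the images of these sub-basic opens under $\Phi$. Since $A\cap U_1=A_1\cap U_1$ for $U_1\subseteq X_1$, we have $A\in U_1^\Diamond$ if and only if $A_1\in U_1^\Diamond$ in $VX_1$, so $\Phi(U_1^\Diamond)=U_1^\Diamond\times VX_2$ (with a harmless reuse of the symbol $U_1^\Diamond$ on the two sides); symmetrically $\Phi(U_2^\Diamond)=VX_1\times U_2^\Diamond$. As the sets $U_1^\Diamond\times VX_2$ and $VX_1\times U_2^\Diamond$ constitute a sub-basis of the product topology on $VX_1\times VX_2$, the bijection $\Phi$ carries a sub-basis of its domain exactly onto a sub-basis of its codomain, whence both $\Phi$ and $\Phi^{-1}$ are continuous and $\Phi$ is the desired homeomorphism.

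I do not expect any genuine obstacle here: the argument is essentially routine, and the only step requiring attention is the reduction in the second paragraph, where the union identity $\bigcup_i U_i^\Diamond=\bigl(\bigcup_i U_i\bigr)^\Diamond$ together with the fact that the summands are clopen in $X_1+X_2$ does all the work. No topological input beyond this is needed.
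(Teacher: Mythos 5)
Your proof is correct and takes essentially the same route as the paper: the identical bijection $A\mapsto(A\cap X_1,A\cap X_2)$ with inverse $(A_1,A_2)\mapsto A_1+A_2$, verified by checking sub-basic opens of the form $U^\Diamond$. The only difference is organizational: you refine the sub-basis of $V(X_1+X_2)$ via $(U_1+U_2)^\Diamond=U_1^\Diamond\cup U_2^\Diamond$ and then match sub-bases, whereas the paper checks continuity of the two directions separately, computing preimages such as $g^{-1}\bigl((U_1+U_2)^\Diamond\bigr)=(U_1^\Diamond\times VX_2)\cup(VX_1\times U_2^\Diamond)$ --- the same identity read in the opposite direction.
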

\begin{proof}
We put $f_1:V(X_1+X_2)\to VX_1,\,C\mapsto C\cap X$ and  $f_2:V(X_1+X_2)\to VX_2,\,C\mapsto C\cap Y$. For open subsets $U_1\subseteq X_1$ and $U_2\subseteq X_2$,
\begin{align*}
 f_1^{-1}(U_1^\Diamond)&=\{C\in V(X_1+X_2)\mid C\cap U_1\neq\varnothing\} \intertext{and}
 f_2^{-1}(U_2^\Diamond)&=\{C\in V(X_1+X_2)\mid C\cap U_2\neq\varnothing\},
\end{align*}
hence both $f_1$ and $f_2$ are continuous. The induced continuous map $f:V(X_1+X_2)\to VX_1\times VX_2$ is bijective with inverse $g:VX_1\times VX_2\to V(X_1+X_2),\,(A_1,A_2)\mapsto A_1+A_2$. For an open subset $W\subseteq X+Y$ we put $U_1=W\cap X_1$ and $U_2=W\cap X_2$, and observe that
\[
 g^{-1}(W)=(U_1^\Diamond\times VX_2)\cup (VX_1\times U_2^\Diamond)
\]
is open in $VX_1\times VX_2$. Consequently, $g$ is continuous as well.
\end{proof}

\begin{lemma}\label{lem:VvsProd}
For topological spaces $X_1$ and $X_2$, the map \[\Pi:V(X_1)\times V(X_2)\to V(X_1\times X_2),\,(A_1,A_2)\mapsto A_1\times A_2\] is continuous and left adjoint to $\can:=\langle V\pi_1,V\pi_2\rangle: V(X_1\times X_2)\to V(X_1)\times V(X_2)$. Hence, if $V(X_1)$ and $V(X_2)$ are stably compact spaces, then $\Pi$ is spectral. 
\end{lemma}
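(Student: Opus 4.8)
The plan is to treat the three assertions in turn: continuity of $\Pi$, the adjunction $\Pi\dashv\can$, and the spectrality of $\Pi$. Throughout I use that the underlying order of $VX$ is \emph{reverse} inclusion and that $A_1\times A_2$ is closed (so $\Pi$ is well defined).

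First I would prove continuity. Since the basic opens of $X_1\times X_2$ are the rectangles $U_1\times U_2$ with $U_i\subseteq X_i$ open, and since $^\Diamond$ turns unions into unions (that is, $(\bigcup_j W_j)^\Diamond=\bigcup_j W_j^\Diamond$), the sets $(U_1\times U_2)^\Diamond$ generate the topology of $V(X_1\times X_2)$; hence it suffices to check that their preimages under $\Pi$ are open. This is a direct computation: $(A_1,A_2)\in\Pi^{-1}((U_1\times U_2)^\Diamond)$ iff $(A_1\times A_2)\cap(U_1\times U_2)=(A_1\cap U_1)\times(A_2\cap U_2)\neq\varnothing$, iff $A_1\cap U_1\neq\varnothing$ and $A_2\cap U_2\neq\varnothing$, so that $\Pi^{-1}((U_1\times U_2)^\Diamond)=U_1^\Diamond\times U_2^\Diamond$, which is open in $V(X_1)\times V(X_2)$.

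Next, the adjunction. Here I note that $\can(C)=(\overline{\pi_1[C]},\overline{\pi_2[C]})$ and that $\can$ is continuous as the pairing $\langle V\pi_1,V\pi_2\rangle$ of continuous maps. Unwinding the definition of $\Pi\dashv\can$, I must verify two inequalities. The unit inequality $(A_1,A_2)\le\can(\Pi(A_1,A_2))$ translates, componentwise and via reverse inclusion, into $\overline{\pi_i[A_1\times A_2]}\subseteq A_i$; this holds because $\pi_i[A_1\times A_2]\subseteq A_i$ and $A_i$ is closed. The counit inequality $\Pi(\can(C))\le C$ translates into $C\subseteq\overline{\pi_1[C]}\times\overline{\pi_2[C]}$, which is immediate since $(x,y)\in C$ forces $x\in\pi_1[C]$ and $y\in\pi_2[C]$. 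The one point demanding care is the behaviour on the empty set: when, say, $A_2=\varnothing$ one has $\pi_1[A_1\times A_2]=\varnothing$ rather than $A_1$, so the unit is a genuine inequality rather than an equality — but this is exactly the direction required, so the argument is unaffected.

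Finally, for spectrality I would invoke the stated criterion that every left adjoint continuous map between stably compact spaces is spectral. The domain $V(X_1)\times V(X_2)$ is stably compact whenever $V(X_1)$ and $V(X_2)$ are, since finite products in $\STCOMP$ are computed as in $\TOP$ (the inclusion $\STCOMP\hookrightarrow\TOP$ preserves limits, Theorem \ref{thm:OCHvsSTC}). Given that $\Pi$ is a left adjoint by the previous step, it only remains to ensure that the codomain $V(X_1\times X_2)$ is stably compact as well, after which the criterion applies directly. \emph{This last point is where I expect the main obstacle to lie}: unlike the product on the domain side, the stable compactness of $V(X_1\times X_2)$ does not follow formally from that of $V(X_1)$ and $V(X_2)$, and I would establish it separately — either from a preservation property of the Vietoris functor on stably compact spaces, or from the additional structure available on the $X_i$ in the intended applications.
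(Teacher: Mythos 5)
Your treatment of continuity and of the adjunction $\Pi\dashv\can$ is exactly the paper's argument: the paper computes $\Pi^{-1}((U_1\times U_2)^\Diamond)=U_1^\Diamond\times U_2^\Diamond$ on the basic opens and then verifies the two order inequalities $(A_1,A_2)\le\can\cdot\Pi(A_1,A_2)$ and $\Pi\cdot\can(W)\le W$, which is precisely what you do (in more detail, including the correct handling of the empty set under reverse inclusion). As for the final claim, the paper's written proof actually stops after establishing the adjunction and leaves the ``Hence'' to the stated criterion that every left adjoint continuous map between stably compact spaces is spectral; so the point you flag --- that the criterion also needs the \emph{codomain} $V(X_1\times X_2)$ to be stably compact, and that this does not follow formally from stable compactness of $V(X_1)$ and $V(X_2)$ --- is a genuine gloss in the paper, and you are right that the domain side is unproblematic because products in $\STCOMP$ are computed as in $\TOP$ (Theorem \ref{thm:OCHvsSTC}). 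However, the missing point closes quickly with the paper's own tools, namely the proposition immediately following this lemma: $VZ$ is stably compact if and only if $Z$ is core-compact. Thus the hypothesis forces $X_1$ and $X_2$ to be core-compact; core-compact spaces are exactly the exponentiable objects of $\TOP$ and hence are closed under finite products (since $(X_1\times X_2)\times(-)\cong X_1\times(X_2\times(-))$ is a composite of left adjoints), so $X_1\times X_2$ is core-compact and $V(X_1\times X_2)$ is stably compact; alternatively, in the paper's intended application $X_1$ and $X_2$ are spectral, and item (5) of that proposition gives stable compactness of $V(X_1\times X_2)$ directly. With that one-line supplement your argument is complete and coincides with the paper's.
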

\begin{proof}
For open subsets $U_1\subseteq X_1$ and $U_2\subseteq X_2$, 
\[
 \Pi^{-1}((U_1\times U_2)^\Diamond)=U_1^\Diamond\times U_2^\Diamond,
\]
hence $\Pi$ is continuous. Given now $(A_1,A_2)\in V(X_1)\times V(X_2)$ and $W\in V(X_1\times X_2)$, we find 
\begin{align*}
 (A,B)\le \can\cdot\Pi(A,B) &&\text{and}&& W\ge\Pi\cdot\can(W),
\end{align*}
hence $\Pi\dashv\can$. 
\end{proof}

\begin{proposition}
\begin{enumerate}
\item For every topological space $X$, $VX$ is sober.
\item A topological space $X$ is core-compact if and only if $VX$ is stably compact. Hence, if $X$ is sober, then $X$ is locally compact if and only if $VX$ is stably compact.
\item If $f:X\to Y$ is a spectral map between stably compact spaces, then $Vf:VX\to VY$ is spectral
\item If $X$ is stably compact, then $e_X:X\to VX$ and $m_X:VVX\to VX$ are spectral.
\item A stably compact space $X$ is spectral if and only if $VX$ is spectral.
\end{enumerate}
\end{proposition}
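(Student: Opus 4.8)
Here is how I would approach the five assertions; they are naturally proved in the given order, each feeding the next.

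\textbf{(1)} Since the underlying order of $VX$ is $A\le B\iff A\supseteq B$, the closure of a point is $\overline{\{A\}}=\{B\in VX\mid B\subseteq A\}$, and comparing $A\subseteq A$ shows at once that $VX$ is $T_0$. For the existence of generic points I would, given an irreducible closed set $\calC\subseteq VX$, write down the candidate explicitly: $A:=\{x\in X\mid U^\Diamond\cap\calC\neq\varnothing\text{ for every open }U\ni x\}$. One checks immediately that $X\setminus A$ is open, so $A\in VX$, and that $B\subseteq A$ for every $B\in\calC$. The one nontrivial point is $A\in\calC$: if not, a basic neighbourhood $U_1^\Diamond\cap\dots\cap U_n^\Diamond$ of $A$ misses $\calC$, yet each $U_i^\Diamond$ meets $\calC$ (choose $x_i\in A\cap U_i$ and use the definition of $A$), which contradicts irreducibility. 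Hence $\calC=\overline{\{A\}}$, and uniqueness is just $T_0$.

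\textbf{(2)} By (1), $VX$ is always sober, and it is always compact: the closed point $\varnothing$ lies in no basic open $U_1^\Diamond\cap\dots\cap U_n^\Diamond$ with $n\ge 1$, so the only open set containing it is $VX$ itself, whence every open cover already contains $VX$. Thus only local compactness and the stability of finite intersections of compact down-closed sets remain, and these I would control through the frame $\calO(VX)$, which is presented as the lower powerlocale of $\calO X$: it is generated under finite meets and arbitrary joins by the $U^\Diamond$ ($U\in\calO X$), subject to $\bigcup_i U_i^\Diamond=(\bigcup_i U_i)^\Diamond$. The plan is to compute the way-below relation on $\calO(VX)$ and to show that $\calO(VX)$ is a continuous (indeed stably continuous) lattice exactly when $\calO X$ is, i.e.\ exactly when $X$ is core-compact; for sober $X$ this is local compactness. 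This transfer of continuity from $\calO X$ to the lower powerlocale is the main obstacle of the whole proposition; alternatively it can be extracted from \citep{Hof12a}.

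\textbf{(3),(4)} Granting (2), whenever $X$ (and $Y$) is stably compact so is $VX$ (and $VY$), so ``spectral'' is meaningful for all three maps, and each is monotone for $\supseteq$, which is immediate. I would then establish spectrality through the isomorphism $K:\ORDCH_\sep\to\STCOMP$ of Theorem \ref{thm:OCHvsSTC}: a continuous map of stably compact spaces is spectral precisely when the corresponding map of ordered compact Hausdorff spaces preserves the compact Hausdorff (patch) topology. After identifying the patch topology of $VX$ with the Vietoris topology on the closed subsets of the patch space of $X$, the maps $x\mapsto\overline{\{x\}}$, $\calA\mapsto\bigcup\calA$ and $A\mapsto\overline{f[A]}$ become the standard continuous hyperspace operations, which settles (3) and (4). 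The left-adjoint criterion recalled above helps only sporadically: $\Pi$ is spectral by Lemma \ref{lem:VvsProd}, but $m_X$ is merely a \emph{right} adjoint (its left adjoint being $B\mapsto\{C\in VX\mid C\subseteq B\}$), so it is not covered. The technical step is therefore the patch-topology identification; failing a clean such description one falls back on verifying directly that the preimage of a compact down-closed set is compact, using the description of such sets obtained in (2).

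\textbf{(5)} Here both spaces are already stably compact, so only the compact-open basis condition is at stake. Since $\bigcup_i U_i^\Diamond=(\bigcup_i U_i)^\Diamond$, the sets $U^\Diamond$ with $U$ compact open in $X$, together with their finite intersections, form a basis of $VX$. For the implication $X$ spectral $\Rightarrow$ $VX$ spectral the crux is that $U^\Diamond$ is compact open whenever $U$ is compact open, after which stability of $VX$ disposes of the finite intersections. For the converse I would exploit the embedding $e_X$, which is spectral by (4) and satisfies $e_X^{-1}(U^\Diamond)=U$, to pull a compact-open basis of $VX$ back to one of $X$.
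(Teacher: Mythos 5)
Your proposal is correct where it is carried out, and it diverges from the paper most visibly in part (1): the paper deduces sobriety of $VX$ from the result of \citep{Hof12a} that every ultrafilter on $VX$ has a smallest convergence point, combined with the fact that every irreducible closed set is the set of limit points of some ultrafilter, whereas you construct the generic point directly. Your construction is sound: $A=\{x\in X\mid U^\Diamond\cap\calC\neq\varnothing\text{ for every open }U\ni x\}$ is closed, contains every member of $\calC$ (so $\calC\subseteq\overline{\{A\}}$, since $\overline{\{A\}}=\{B\in VX\mid B\subseteq A\}$), and irreducibility of $\calC$ --- the fact that finitely many opens meeting $\calC$ have a common point in $\calC$ --- forces $A\in\calC$. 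This buys an elementary, self-contained argument where the paper leans on convergence machinery. Your part (5) is likewise essentially complete: the crux ``$U$ compact $\Rightarrow U^\Diamond$ compact'' follows from Alexander's subbase lemma, because a subbasic cover $U^\Diamond\subseteq\bigcup_i W_i^\Diamond=\bigl(\bigcup_i W_i\bigr)^\Diamond$ entails $U\subseteq\bigcup_i W_i$ (test with the points $\overline{\{x\}}$, $x\in U$, noting that an open set meeting $\overline{\{x\}}$ contains $x$); and in the converse direction $e_X^{-1}$ of a compact open of $VX$ is indeed compact open, since opens are down-closed and $e_X$ is spectral by (4).

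For parts (2)--(4) your text is a plan rather than a proof, but the comparison is still in your favour: the paper's entire proof of (2)--(5) is the statement that proofs can be found in \citep{Hof12a}, which is exactly the reference you fall back on. Two caveats on your plans. In (3),(4), the patch space of $VX$ is \emph{not} the Vietoris hyperspace of all closed subsets of the patch space of $X$: the underlying set of $VX$ consists only of the sets closed in the topology of $X$ itself, equivalently the patch-closed sets that are up-closed in the underlying order, so the identification must be with this sub-hyperspace (whose closedness in the full hyperspace uses closedness of the order relation); carrying this out is the real technical content, and it is what \citep{Hof12a} supplies. In (2), the asserted equivalence splits unevenly: the implication from $VX$ stably compact to $X$ core-compact is cheap, since $U\mapsto U^\Diamond$ and $e_X^{-1}$ exhibit $\calO X$ as a retract of $\calO(VX)$ by maps preserving directed joins, and such retracts of continuous lattices are continuous; the forward implication, together with stability, is the genuine obstacle, and neither you nor the paper proves it directly. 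Finally, your observation that $m_X$ is only a right adjoint is correct and worth keeping: its left adjoint $B\mapsto\{C\in VX\mid C\subseteq B\}$ is exactly $e_{VX}$, so the left-adjoint criterion cannot settle (4).
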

\begin{proof}
It is shown in \citep{Hof12a} that every ultrafilter on $VX$ has a smallest convergence point. Since every irreducible closed subset of a topological space is the set of limit points of some ultrafilter, this shows that $VX$ is sober. Proofs for all other properties can be also found in \citep{Hof12a}, for instance.
\end{proof}

In particular, the monad $\mV=\vmonad$ on $\TOP$ restricts to a monad on $\SPEC$, also denoted as $\mV=\vmonad$. We cannot restrict $\mV$ further to $\STONE$ since $VX$ is not even T$_1$, except for $X=\varnothing$. However, we can transfer $\mV$ to a monad $\mtV=\tvmonad$ on $\STONE$ via the adjunction $\SPEC\rightleftarrows\STONE$. Explicitly, $\hat{V}X$ is again the set of all closed subsets of $X$ but now with the topology generated by the sets
\begin{align*}
 U^\Diamond\hspace{1em}\text{($U\subseteq X$ open)} &&\text{and}&& \{A\subseteq X\text{ closed}\mid A\cap K=\varnothing\}\hspace{1em}\text{($K\subseteq X$ compact)}.
\end{align*}
The unit $e$ and the multiplication $m$ are defined as above, but note that $e_X(x)=\{x\}$ since $X$ is Hausdorff. 

\section{Kleisli dualities}\label{sect:KleisliDual}

The aim of this section is to explain how Theorem \ref{thm:KleisiEq} can be used to extend duality results to larger categories. Motivated by our main examples, we consider here natural dualities (see \citep{PT91} and \citep{CD98}) with a category of algebras on one side. 

\begin{assumption}\label{ass:General}
Throughout this section we let $\catfont{X}$ be a category equipped with a faithful functor $|-|:\catfont{X}\to \SET$. Moreover, we fix an object $\tilde{X}$ in $\catfont{X}$ and assume that $\catfont{X}$ has all powers of $\tilde{X}$ and all equalisers of pairs of morphisms between powers of $\tilde{X}$, and that $|-|$ preserves these limits. Furthermore, let $\Omega=(\Omega_0,\delta)$ be a signature, that is, $\Omega_0$ is a class (of operation symbols) and $\delta:\Omega_0\to\Card$ assigns to each operation symbol its arity. We write $\Alg{\Omega}$ to denote the category of $\Omega$-algebras and $\Omega$-homomorphisms. We also assume that an $\Omega$-algebra $\tilde{B}$ is given with $|\tilde{X}|=|\tilde{B}|$ and so that, for every operation symbol from $\Omega$ with arity $k$, the operation $|\tilde{B}|^k\to|\tilde{B}|$ underlies an $\catfont{X}$-morphism $\tilde{X}^k\to\tilde{X}$. In the sequel $\catfont{B}$ denotes the full subcategory of $\Alg{\Omega}$ defined by those $\Omega$-algebras $B$ where the source $\Alg{\Omega}(B,\tilde{B})$ is point-separating.
\end{assumption}

The following result is well-known in duality theory, see \citep[VI.4]{Joh86} or \citep[Lemma 1.5]{Hof02}, for instance.

\begin{theorem}\label{thm:BasicAdj}
Under the assumptions above, there is an adjunction
\[
 (F\dashv G,\eta,\eps):\catfont{B}^\op\rightleftarrows\catfont{X}
\]
so that
\begin{align*}
 |G|=\catfont{B}(-,\tilde{B}) &&\text{and}&& |F|=\catfont{X}(-,\tilde{X})
\end{align*}
and, for every object $B$ in $\catfont{B}$ and every object $X$ in $\catfont{X}$, the objects $GB$ and $FX$ have the $|-|$-initial structure with respect to the sources
\[
 (\catfont{B}(B,\tilde{B})\xrightarrow{\ev_x}|\tilde{B}|=|\tilde{X}|,\,h\mapsto h(x))_{x\in|B|}
\]
respectively
\[
(\catfont{X}(X,\tilde{X})\xrightarrow{\ev_x}|\tilde{X}|=|\tilde{B}|,\,h\mapsto h(x))_{x\in|X|}.
\]
Moreover, both $\eta_X$ and $\eps_B$ send $x$ to $\ev_x$.
\end{theorem}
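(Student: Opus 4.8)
The plan is to construct the two functors explicitly and then produce a bijection of hom-sets which is natural in both variables; the unit and co-unit are then read off as transposes of identities. The only non-formal ingredient is the construction of $G$ on objects, so I would start there. For an object $B$ of $\catfont{B}$ the set $\catfont{B}(B,\tilde{B})=\Alg{\Omega}(B,\tilde{B})$ sits inside $|\tilde{X}|^{|B|}$ as exactly those maps commuting with every operation. For an operation symbol $\omega$ of arity $k=\delta(\omega)$ with structure morphism $\omega\colon\tilde{X}^k\to\tilde{X}$, the homomorphism condition on $h\in|\tilde{X}|^{|B|}$ is an equality of two maps $|\tilde{X}|^{|B|}\to|\tilde{X}|^{|B|^k}$, each of which underlies a morphism between powers of $\tilde{X}$: the left-hand side is reindexing $\tilde{X}^{|B|}\to\tilde{X}^{|B|^k}$ along $\omega^B\colon|B|^k\to|B|$, while the right-hand side is obtained by pairing the $k$ reindexings along the projections $|B|^k\to|B|$ and postcomposing with the $|B|^k$-th power $(\tilde{X}^k)^{|B|^k}\to\tilde{X}^{|B|^k}$ of $\omega$. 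Bundling over the set $S=\coprod_{\omega\in\Omega_0}|B|^{\delta(\omega)}$ turns all of these conditions into a single pair $\tilde{X}^{|B|}\rightrightarrows\tilde{X}^{S}$ of morphisms between powers of $\tilde{X}$, and I would define $GB$ to be its equaliser. Since $\catfont{X}$ has, and $|-|$ preserves, such equalisers and powers, $GB$ is an object of $\catfont{X}$ whose underlying set is the set-theoretic equaliser $\catfont{B}(B,\tilde{B})$; being a subobject of a power it carries exactly the $|-|$-initial structure for the source $(\ev_x)_{x\in|B|}$.

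Next I would treat $F$. On an object $X$ I equip $\catfont{X}(X,\tilde{X})$ with the $\Omega$-algebra $FX$ whose operation $\omega$ sends $(h_i)_{i<k}$ to $\omega\circ\langle h_i\rangle_{i<k}\colon X\to\tilde{X}$. Evaluating at a point and using $|\omega|=\omega^{\tilde{B}}$ shows this is precisely the pointwise structure inherited from the product algebra $\tilde{B}^{|X|}$; in particular each $\ev_x\colon FX\to\tilde{B}$ is a homomorphism and these separate points, so $FX$ lies in $\catfont{B}$ and carries the initial structure for $(\ev_x)_{x\in|X|}$. Functoriality is then routine: $Ff=(-)\circ f$ is a homomorphism because the operations of $FX$ are pointwise, and $G\phi=(-)\circ\phi$ is an $\catfont{X}$-morphism because, by initiality of $GB$, it suffices that each composite $\ev_x\circ G\phi=\ev_{\phi(x)}$ be an $\catfont{X}$-morphism, which it is by construction.

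For the adjunction itself I would avoid the triangle identities and instead establish a bijection $\Phi\colon\catfont{B}(B,FX)\cong\catfont{X}(X,GB)$ natural in both variables. A homomorphism $g\colon B\to FX$ is sent to $\hat{g}\colon X\to GB$, $x\mapsto(b\mapsto g(b)(x))$; the rule $b\mapsto g(b)(x)$ is a homomorphism $B\to\tilde{B}$ precisely because $g$ is a homomorphism and the operations of $FX$ and $\tilde{B}$ are pointwise, and $\hat{g}$ is an $\catfont{X}$-morphism since $\ev_b\circ\hat{g}=g(b)$ is one for each $b$, so initiality of $GB$ applies. Conversely an $\catfont{X}$-morphism $p\colon X\to GB$ is sent to $\check{p}\colon b\mapsto\ev_b\circ p$, a composite of $\catfont{X}$-morphisms which the same pointwise computation shows to be a homomorphism. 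These assignments are mutually inverse and natural, which is exactly the data of the adjunction $(F\dashv G,\eta,\eps)\colon\catfont{B}^\op\rightleftarrows\catfont{X}$. Transposing identities then recovers the stated (co)units: $\eta_X=\Phi(1_{FX})$ sends $x$ to $\ev_x$, and $\eps_B=\Phi^{-1}(1_{GB})$ sends $b$ to $\ev_b$.

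I expect the single real obstacle to be the first step. The existence of $GB$ inside $\catfont{X}$ is not formal, and is exactly where the standing hypothesis that $\catfont{X}$ admits, and $|-|$ preserves, powers of $\tilde{X}$ and equalisers of pairs of morphisms between such powers is used. The delicate point is the faithful translation of the equational condition ``being an $\Omega$-homomorphism'' into a single parallel pair between powers of $\tilde{X}$, after which preservation of these limits by $|-|$ forces the underlying set of the equaliser to be $\catfont{B}(B,\tilde{B})$ with its initial structure. Everything downstream is a pointwise verification driven by the compatibility $|\omega|=\omega^{\tilde{B}}$ between the structure maps $\tilde{X}^k\to\tilde{X}$ and the operations of $\tilde{B}$.
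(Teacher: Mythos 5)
The paper itself gives no proof of this theorem; it defers to \citep[VI.4]{Joh86} and \citep[Lemma 1.5]{Hof02}. Your argument is the standard construction that those references formalise, and most of it is sound: the pointwise algebra structure making $FX=\catfont{X}(X,\tilde{X})$ an object of $\catfont{B}$, the initiality arguments for the action of $F$ and $G$ on morphisms, the hom-set bijection $\catfont{B}(B,FX)\cong\catfont{X}(X,GB)$, and the identification of $\eta$ and $\eps$ as transposes of identities are all correct (the check that the two transpositions are mutually inverse silently uses faithfulness of $|-|$, but that is routine).

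There is, however, a genuine gap in the one step you yourself flag as the only non-formal one, the construction of $GB$. You bundle the homomorphism conditions into a single parallel pair by forming ``the set $S=\coprod_{\omega\in\Omega_0}|B|^{\delta(\omega)}$''. But Assumption~\ref{ass:General} explicitly allows $\Omega_0$ to be a proper \emph{class}, and the paper's leading examples need this: presenting $\CABOOL$ (Example 1) or $\SFRM$ (Examples 3 and 4) as categories of $\Omega$-algebras requires a join (and meet) operation symbol of every arity, so $\Omega_0$ is a proper class there. Then $S$ is a proper class, $\tilde{X}^S$ is not among the powers guaranteed by Assumption~\ref{ass:General}, and no such power exists even for $\catfont{X}=\SET$. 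So, as written, your construction of $GB$ fails in exactly the cases the theorem is meant to serve. The repair is local. Split each condition pointwise: for $\omega\in\Omega_0$ and $\vec{b}\in|B|^{\delta(\omega)}$, the requirement $h(\omega^B(\vec{b}))=\omega^{\tilde{B}}(h\circ\vec{b})$ is the equality of a parallel pair $\tilde{X}^{|B|}\rightrightarrows\tilde{X}$, namely the projection $\ev_{\omega^B(\vec{b})}$ against the composite $\omega\cdot\tilde{X}^{\vec{b}}$, where $\tilde{X}^{\vec{b}}$ is reindexing along $\vec{b}:\delta(\omega)\to|B|$. Since $|-|$ is faithful into $\SET$, the category $\catfont{X}$ is locally small, so the class of all pairs arising this way is a subclass of the set $\catfont{X}(\tilde{X}^{|B|},\tilde{X})\times\catfont{X}(\tilde{X}^{|B|},\tilde{X})$ and hence a set $E$. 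Bundling over $E$ instead of $S$ yields a genuine pair $\tilde{X}^{|B|}\rightrightarrows\tilde{X}^{E}$ between set-indexed powers of $\tilde{X}$, whose ($|-|$-preserved) equaliser has underlying set $\Alg{\Omega}(B,\tilde{B})$ with the required initial structure; the remainder of your proof then goes through unchanged.
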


Note that the adjunction above can be restricted to a full subcategory of $\catfont{B}^\op$ which includes the image of $F$. For instance, if $\Omega'=(\Omega_0',\delta')$ is a sub-signature of $\Omega$ (that is, $\Omega_0'\subseteq \Omega_0$ and $\delta'$ is the restriction of $\delta$ to $\Omega_0'$), applying the result above to $\Omega'$ in lieu of $\Omega$ yields an adjunction $(F'\dashv G',\eta',\eps'):\catfont{B'}^\op\rightleftarrows\catfont{X}$. We write $\catfont{B}_{\Omega'}$ to denote the category of $\Omega$-algebras and $\Omega'$-homomorphism. Then $\catfont{B}_{\Omega'}$ is equivalent to a full subcategory of $\catfont{B'}$ which includes the image of $F'$, hence we obtain an adjunction
\[
 (F'\dashv G',\eta',\eps'):\catfont{B}_{\Omega'}^\op\rightleftarrows\catfont{X}.
\]

\begin{lemma}
Assume that the natural transformation $\eps$ of Theorem \ref{thm:BasicAdj} is a natural isomorphism. Then $(F'\dashv G',\eta',\eps'):\catfont{B}_{\Omega'}^\op\rightleftarrows\catfont{X}$ is a Kleisli adjunction.
\end{lemma}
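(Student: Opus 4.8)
The plan is to invoke the characterisation of Kleisli adjunctions recalled above—namely, that an adjunction is a Kleisli adjunction precisely when its left adjoint is essentially surjective on objects—and thereby reduce the claim to verifying that $F'\colon\catfont{X}\to\catfont{B}_{\Omega'}^\op$ is essentially surjective. Here it is essential to keep track of how the three categories $\catfont{B}$, $\catfont{B}'$ and $\catfont{B}_{\Omega'}$ differ: $\catfont{B}_{\Omega'}$ has the \emph{same objects} as $\catfont{B}$ (point-separating $\Omega$-algebras) but only the $\Omega'$-homomorphisms between them, and under the equivalence with a full subcategory of $\catfont{B}'$ an object is sent to its $\Omega'$-reduct. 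Consequently the restricted left adjoint $F'$ acts on objects exactly as the $\Omega$-left-adjoint $F$ of Theorem \ref{thm:BasicAdj} does, since $|F|=|F'|=\catfont{X}(-,\tilde{X})$ and the operations of $FX$ and $F'X$ are computed pointwise from $\tilde{B}$; thus $F'X$ is just $FX$ regarded with its coarser morphism class.

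First I would fix an arbitrary object $B$ of $\catfont{B}_{\Omega'}$. Since the objects of $\catfont{B}_{\Omega'}$ are the point-separating $\Omega$-algebras, $B$ is in particular an object of $\catfont{B}$, so the counit component $\eps_B\colon FGB\to B$ of the full $\Omega$-adjunction is defined, where $G\colon\catfont{B}^\op\to\catfont{X}$ is the right adjoint of Theorem \ref{thm:BasicAdj}. By hypothesis $\eps$ is a natural isomorphism, so $\eps_B$ is an isomorphism in $\catfont{B}^\op$, that is, an $\Omega$-isomorphism between $B$ and $FGB$. The key observation is that an $\Omega$-homomorphism is automatically an $\Omega'$-homomorphism, and the same holds for the inverse of $\eps_B$; hence the \emph{same} map $\eps_B$ is an isomorphism in $\catfont{B}_{\Omega'}$ as well, and therefore in $\catfont{B}_{\Omega'}^\op$. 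This exhibits $B\cong FGB=F'(GB)$ in $\catfont{B}_{\Omega'}^\op$, so $B$ lies in the essential image of $F'$. As $B$ was arbitrary, $F'$ is essentially surjective and the adjunction $(F'\dashv G',\eta',\eps')\colon\catfont{B}_{\Omega'}^\op\rightleftarrows\catfont{X}$ is a Kleisli adjunction.

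The routine verifications are the two identifications just used: that on objects $F'$ agrees with $F$ (immediate from the initial-structure description and the pointwise operations in Theorem \ref{thm:BasicAdj}), and that passing from $\Omega$-homomorphisms to the weaker $\Omega'$-homomorphisms preserves isomorphisms. The only genuinely delicate point—and where care is required—is the bookkeeping distinguishing $\catfont{B}$, $\catfont{B}'$ and $\catfont{B}_{\Omega'}$: one must be certain that $\catfont{B}_{\Omega'}$ retains the full $\Omega$-algebra objects (so that $B\in\catfont{B}$ and $\eps_B$ is available) while weakening only the morphisms, which is exactly what allows the hypothesis on the $\Omega$-counit $\eps$ to be transported into essential surjectivity of the restricted left adjoint.
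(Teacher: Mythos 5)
Your proof is correct and follows exactly the route the paper intends: the lemma is stated without proof precisely because it is immediate from the preceding characterisation of Kleisli adjunctions, and your argument---that $\eps_B$ is an $\Omega$-isomorphism, hence an isomorphism in $\catfont{B}_{\Omega'}$, exhibiting every object of $\catfont{B}_{\Omega'}^\op$ as isomorphic to $F'(GB)$, so $F'$ is essentially surjective on objects---is that intended argument. The bookkeeping you carry out (that $\catfont{B}_{\Omega'}$ has the objects of $\catfont{B}$ with the larger class of $\Omega'$-homomorphisms, and that $F'$ agrees with $F$ on objects) is exactly the right care to take and matches the paper's setup.
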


In order to identify the induced monad with a more familiar one, we consider now the following situation.

\begin{assumption}\label{ass:Monad}
Let $\mT=\monad$ be a monad on $\catfont{X}$ with $T(X_1)\simeq\tilde{X}$, for some object $X_1$ in $\catfont{X}$; to simplify notation, we assume that $\tilde{X}$ is chosen so that $T(X_1)=\tilde{X}$. Hence, $\tilde{X}$ is a $\mT$-algebra, and therefore every $h:X\to\tilde{X}$ has a unique extension $\bar{h}:TX\to\tilde{X}$ where $\bar{h}$ is a $\mT$-homomorphism with $\bar{h}\cdot e_X=h$. For every object $X$ in $\catfont{X}$ we assume:
\begin{itemize}
\item the map
\[
 \bar{(-)}:\catfont{X}(X,\tilde{X})\to\catfont{X}(TX,\tilde{X}),\, h\mapsto \bar{h}
\]
preserves all operations from $\Omega'$, that is, $\bar{(-)}$ is a morphism in $\catfont{B}_{\Omega'}^\op$ of type $F'X\to F'TX$, and
\item the source $(\bar{h}:TX\to\tilde{X})_{h\in\catfont{X}(X,\tilde{X})}$ is point-separating and $|-|$-initial.
\end{itemize}
\end{assumption}

\begin{proposition}\label{prop:J}
Under the assumptions above, the hom-functor $\catfont{X}_\mT(-,X_1)$ lifts to a functor $J:\catfont{X}_\mT\to\catfont{B}_{\Omega'}^\op$ so that the diagram
\[
 \xymatrix{\catfont{X}_\mT\ar[rr]^J && \catfont{B}_{\Omega'}^\op\\
 & \catfont{X}\ar[ul]^{F_\mT}\ar[ur]_{F'}}
\]
commutes. The induced monad morphism $j$ is component-wise an embedding with $X$-component
\[
 j_X:TX\to G'F'X,\,\fx\mapsto(h\mapsto\bar{h}(\fx)),
\]
for each object $X$ in $\catfont{X}$.
\end{proposition}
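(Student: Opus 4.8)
The plan is to construct $J$ explicitly on objects and morphisms, recognise it as a left morphism of adjunctions over $\catfont{X}$, and then read the induced monad morphism off the canonical transformation $\iota$ from Section~1. On objects the hom-functor gives $\catfont{X}_\mT(X,X_1)=\catfont{X}(X,TX_1)=\catfont{X}(X,\tilde{X})=|F'X|$, so I would simply put $J(X)=F'X$; since $F_\mT$ is the identity on objects this makes the triangle $JF_\mT=F'$ hold on objects for free. For a Kleisli morphism $f:X\kto Y$, that is an $\catfont{X}$-morphism $f:X\to TY$, the hom-functor sends $g\in\catfont{X}_\mT(Y,X_1)=\catfont{X}(Y,\tilde{X})$ to the Kleisli composite $g\cdot f$. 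Unfolding the Kleisli composition and using that $\tilde{X}=TX_1$ carries the free $\mT$-algebra structure $m_{X_1}$, I would check that $g\cdot f=\bar{g}\cdot f$, where $\bar{g}=m_{X_1}\cdot Tg$ is the unique $\mT$-homomorphic extension of $g$; thus the underlying function of $J(f)$ is $g\mapsto\bar{g}\cdot f$.

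The key step is to see that this function underlies a morphism of $\catfont{B}_{\Omega'}^\op$. I would factor it as
\[
\catfont{X}(Y,\tilde{X})\xrightarrow{\ \bar{(-)}\ }\catfont{X}(TY,\tilde{X})\xrightarrow{\ (-)\cdot f\ }\catfont{X}(X,\tilde{X}),
\]
where the first map preserves all operations of $\Omega'$ by Assumption~\ref{ass:Monad}, and the second is exactly the underlying function of $F'f$ (precomposition with $f$), hence an $\Omega'$-homomorphism by Theorem~\ref{thm:BasicAdj} applied to $\Omega'$. Since $|-|$ is faithful, this simultaneously defines $J$ on morphisms and forces functoriality; the identity $\bar{g}\cdot e_Y=g$ then yields $J(F_\mT h)=F'h$ for $h$ in $\catfont{X}$, completing the commuting triangle $JF_\mT=F'$.

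With the triangle in place, $J$ is by definition a left morphism of the adjunctions $F_\mT\dashv G_\mT$ and $F'\dashv G'$ over $\catfont{X}$, so it induces the monad morphism $j=\iota_{F_\mT}:\mT\to\mT'$, where $\mT'$ is the monad $G'F'$. To recover the stated formula I would evaluate $\iota=G'J\eps\cdot\eta'_{G_\mT}$ at $X$, so that $j_X=\iota_X=G'(J\eps_X)\cdot\eta'_{TX}$. The Kleisli counit $\eps_X\colon F_\mT G_\mT X\to X$ is the identity $\mathrm{id}_{TX}$ regarded as a Kleisli arrow $TX\kto X$, whence $J\eps_X$ is precisely $\bar{(-)}$, while $\eta'_{TX}$ sends $\fx$ to $\ev_\fx$ by Theorem~\ref{thm:BasicAdj}. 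Composing, $j_X(\fx)=\ev_\fx\cdot\bar{(-)}$, which is the map $h\mapsto\bar{h}(\fx)$, as claimed.

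Finally, to prove that each $j_X$ is an embedding I would invoke the two properties in Assumption~\ref{ass:Monad}. Evaluating, one has $\ev_h\cdot j_X=\bar{h}$ for every $h\in\catfont{X}(X,\tilde{X})$, so the source $(\bar{h})_h$ factors through $j_X$. Point-separation of $(\bar{h})_h$ then forces $|j_X|$ to be injective, and, since $(\ev_h\cdot j_X)_h=(\bar{h})_h$ is $|-|$-initial, the left-cancellation property of initial sources forces $j_X$ itself to be $|-|$-initial; together these say $j_X$ is an embedding. The main obstacle, I expect, is not any single estimate but keeping the variance and op-conventions consistent throughout: identifying the hom-functor action with $g\mapsto\bar{g}\cdot f$ via the algebra structure on $\tilde{X}=TX_1$, and matching the direction of $\bar{(-)}$ as a $\catfont{B}_{\Omega'}^\op$-arrow so that the computation of $\iota_{F_\mT}$ lands exactly on the advertised formula.
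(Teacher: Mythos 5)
Your proposal is correct and follows essentially the same route as the paper: define $J$ on objects as $F'X$, factor the hom-functor's action on a Kleisli morphism as $\bar{(-)}$ followed by precomposition (the latter being $|F'f|$), deduce the triangle $JF_\mT=F'$, compute $j_X=\iota_{F_\mT X}$ from the canonical formula $\iota=G'J\eps\cdot\eta'_{G_\mT}$ using that the Kleisli counit is $1_{TX}$, and obtain the embedding from $\ev_h\cdot j_X=\bar{h}$ together with Assumption~\ref{ass:Monad}. You merely make explicit two points the paper leaves implicit (functoriality via faithfulness of the forgetful functor, and injectivity/initiality of $j_X$ via point-separation and the cancellation property of initial sources), which is exactly the intended reasoning.
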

\begin{proof}
For each object $X$ in $\catfont{X}_\mT$, one has $\catfont{X}_\mT(X,X_1)=\catfont{X}(X,\tilde{X})$, and we put $JX=F'X$. Given a morphism $r:X\kto Y$ in $\catfont{X}_\mT$, the map $\catfont{X}_\mT(r,X_1):\catfont{X}_\mT(Y,X_1)\to \catfont{X}_\mT(X,X_1)$ sends $h\in \catfont{X}_\mT(Y,X_1)=\catfont{X}(Y,\tilde{X})$ to $m_{X_1}\cdot Th\cdot r$. Hence, this map can be written as the composite
\[
 \catfont{X}(Y,\tilde{X})\xrightarrow{\bar{(-)}}\catfont{X}(TY,\tilde{X})\xrightarrow{\catfont{X}(r,\tilde{X})}\catfont{X}(X,\tilde{X})
\]
and therefore underlies a morphism $Jr:JY\to JX$ in $\catfont{B}_{\Omega'}$. In conclusion, this construction yields a functor $J:\catfont{X}_\mT\to\catfont{B}_{\Omega'}^\op$ which clearly satisfies $J F_\mT=F'$. To describe the corresponding natural transformation $\iota:G_\mT\to G'J$, we note that $\eta'_{G_\mT X}$ is given by the map
\[
 \eta'_{G_\mT X}:TX\to\catfont{B}_{\Omega'}(\catfont{X}(TX,\tilde{X}),\tilde{B}),\,\fx\mapsto\ev_\fx,
\]
$\eps_X:TX\kto X$ of $F_\mT\dashv G_\mT$ corresponds to the $\catfont{X}$-morphism $1_TX:TX\to TX$ and therefore $J\eps_X$ sends $h\in\catfont{X}(X,\tilde{X})$ to $\bar{h}$, and $G' J\eps_X$ sends $\Phi:JTX\to\tilde{B}$ to the map $\catfont{X}(X,\tilde{X})\to\tilde{B},\,h\mapsto\Phi(\bar{h})$. Putting all together, $\iota_X:G_\mT X\to G'JX$ sends $\fx\in TX$ to the map $\catfont{X}(X,\tilde{X})\to\tilde{B},\,h\mapsto\bar{h}(\fx)$; and from $j_X=\iota_{F_\mT X}$ we obtain the desired description of $j$. Finally, for every object $X$ in $\catfont{X}$ and every morphism $h:X\to\tilde{X}$ in $\catfont{X}$, the diagram
\[
 \xymatrix{TX\ar[r]^-{j_X}\ar[dr]_{\bar{h}} & \catfont{B}_{\Omega'}(F'X,\tilde{B})\ar[d]^{\ev_h}\\ & X}
\]
commutes and therefore $j_X:TX\to G'F'X$ is an embedding in $\catfont{X}$. 
\end{proof}

By construction, the diagram
\[
 \xymatrix{\catfont{X}_\mT\ar[r]^J & \catfont{B}_{\Omega'}^\op\\ \catfont{X}\ar[u]\ar[r]_F & \catfont{B}^\op\ar[u]}
\]
of functors commutes. If both $J$ and $F$ are equivalence functors, then a category constructed from $\catfont{X}\to\catfont{X}_\mT$ is dually equivalent to the category obtained from $\catfont{B}\to\catfont{B}_{\Omega'}$ by the same construction. For instance, the category $\Coalg{T:\catfont{X}\to\catfont{X}}$ of coalgebras for $T$ has as objects $\catfont{X}$-morphisms $e:X\to TX$, and a morphism $f:(X,e)\to (X',e')$ in $\Coalg{T:\catfont{X}\to\catfont{X}}$ is a $\catfont{X}$-morphism $f:X\to X'$ with $Tf\cdot e=e'\cdot f$. Equivalently, we can think of the objects of $\Coalg{T:\catfont{X}\to\catfont{X}}$ as endomorphisms $e:X\kto X$ in $\catfont{X}_\mT$, and $Tf\cdot e=e'\cdot f$ means precisely that the diagram
\[
 \xymatrix{X\ar@{-^>}[r]^{F_\mT f} & Y\\ X\ar@{-^>}[u]^e\ar@{-^>}[r]_{F_\mT f} & Y\ar@{-^>}[u]_{e'}}
\]
commutes in $\catfont{X}_\mT$. Therefore:

\begin{corollary}\label{cor:JwithOp}
Assume that $J:\catfont{X}_\mT\to\catfont{B}_{\Omega'}^\op$ and $F:\catfont{X}\to\catfont{B}^\op$ are equivalence functors. then the category $\Coalg{T:\catfont{X}\to\catfont{X}}$ is dually equivalent to the category $\CatOp{\catfont{B}}{\Omega'}$ of ``$\catfont{B}$-objects with operator'', that is, the objects of $\CatOp{\catfont{B}}{\Omega'}$ are $\catfont{B}$-objects $B$ equipped with an unitary operation $h:B\to B$ which preserves all operations from $\Omega'$, and a morphism $f:(B,h)\to (B',h')$ in $\CatOp{\catfont{B}}{\Omega'}$ is a $\catfont{B}$-morphism $f:B\to B'$ with $h'\cdot f=f\cdot h$.
\end{corollary}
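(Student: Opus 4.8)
The plan is to exhibit both categories as instances of a single ``objects-with-operator'' construction and then transport an equivalence along the commuting square displayed before the statement. For a functor $A\colon\catfont{C}\to\catfont{D}$, let $\catfont{End}(A)$ denote the category of pairs $(C,d)$ with $C$ an object of $\catfont{C}$ and $d\colon AC\to AC$ an endomorphism in $\catfont{D}$, a morphism $f\colon(C,d)\to(C',d')$ being a $\catfont{C}$-morphism $f\colon C\to C'$ with $d'\cdot Af=Af\cdot d$. Two identifications set the stage. First, the reformulation of $T$-coalgebras recalled just before the statement presents $\Coalg{T:\catfont{X}\to\catfont{X}}$ as $\catfont{End}(F_\mT)$ for the identity-on-objects functor $F_\mT\colon\catfont{X}\to\catfont{X}_\mT$. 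Second, since an $\Omega'$-preserving unitary operation on $B$ is exactly an endomorphism of $B$ in $\catfont{B}_{\Omega'}$, the category $\CatOp{\catfont{B}}{\Omega'}$ is precisely $\catfont{End}(\iota)$ for the identity-on-objects inclusion $\iota\colon\catfont{B}\to\catfont{B}_{\Omega'}$; a direct verification gives $\catfont{End}(A)^\op\cong\catfont{End}(A^\op)$, so that $(\CatOp{\catfont{B}}{\Omega'})^\op\cong\catfont{End}(I)$ with $I=\iota^\op\colon\catfont{B}^\op\to\catfont{B}_{\Omega'}^\op$ the inclusion.

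Next I would record that $\catfont{End}(-)$ is functorial on commuting squares: from $A'\cdot P=Q\cdot A$ one obtains a functor $\catfont{End}(A)\to\catfont{End}(A')$ sending $(C,d)$ to $(PC,Qd)$ — here $Qd\colon QAC\to QAC$ and $QAC=A'PC$ — and $f$ to $Pf$, the commutation condition being preserved because applying $Q$ to $d'\cdot Af=Af\cdot d$ and using $QA=A'P$ yields $Qd'\cdot A'Pf=A'Pf\cdot Qd$. The relevant instance is exactly the diagram preceding the statement, with $P=F$, $Q=J$, $A=F_\mT$ and $A'=I$, where $A'P=IF=F'=JF_\mT=QA$ by Proposition \ref{prop:J}. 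This produces a canonical functor $\Phi\colon\Coalg{T:\catfont{X}\to\catfont{X}}\to(\CatOp{\catfont{B}}{\Omega'})^\op$.

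The heart of the argument is that $\catfont{End}(-)$ carries a square of equivalences to an equivalence, so I would check that $\Phi$ is fully faithful and essentially surjective using that $P=F$ and $Q=J$ are equivalences. Faithfulness is inherited from $P$. For fullness, a morphism $(PC,Qd)\to(PC',Qd')$ is of the form $Pf$ by fullness of $P$, and its defining identity together with $A'P=QA$ and faithfulness of $Q$ forces $d'\cdot Af=Af\cdot d$, so $f$ is already a morphism of $\catfont{End}(A)$. For essential surjectivity, given $(C',d')$ I would choose an isomorphism $\phi\colon PC\to C'$ (as $P$ is essentially surjective), transport $d'$ to $\bar d'=(A'\phi)^{-1}\cdot d'\cdot A'\phi\colon A'PC\to A'PC$, and lift it through the full functor $Q$ to an endomorphism $d\colon AC\to AC$ with $Qd=\bar d'$; then $\phi$ is an isomorphism $(PC,Qd)\to(C',d')$ in $\catfont{End}(A')$.

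I expect the only genuine obstacle to be the bookkeeping of the two nested opposite categories, namely matching the condition $h'\cdot f=f\cdot h$ and the morphism direction in $\CatOp{\catfont{B}}{\Omega'}$ against the operator condition in $\catfont{End}(I)$; this is precisely what renders the conclusion a \emph{dual} equivalence rather than a plain one. Everything else is supplied by the coalgebra reformulation and the commuting square already established, and the equivalence-preservation of $\catfont{End}(-)$ reduces to the routine diagram chase sketched above.
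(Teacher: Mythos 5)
Your proposal is correct and takes essentially the same route as the paper: the paper also reformulates $\Coalg{T:\catfont{X}\to\catfont{X}}$ as the category of endomorphisms $e:X\kto X$ in $\catfont{X}_\mT$ (with morphisms the $F_\mT f$-squares) and then transports this construction along the commuting square $JF_\mT=IF$ of equivalences. The only difference is one of explicitness: your $\catfont{End}(-)$ construction, its functoriality on commuting squares, and the verification that it preserves equivalences spell out in detail the step the paper merely asserts (``a category constructed from $\catfont{X}\to\catfont{X}_\mT$ is dually equivalent to the category obtained from $\catfont{B}\to\catfont{B}_{\Omega'}$ by the same construction'').
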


\begin{examples}
\begin{trivlist}
\item[\hskip\labelsep (1)] We consider first $\catfont{X}=\SET$ with $\tilde{X}=2$, $\catfont{B}$ is the category $\CABOOL$ of complete atomic Boolean algebras and functions preserving all suprema and all infima with $\tilde{B}=2$ being the two-element Boolean algebra. It is well-known that the corresponding adjunction
\[
 (F\dashv G,\eta,\eps):\CABOOL^\op\rightleftarrows\SET
\]
is actually an equivalence (see \citep{Joh86}, for instance). We consider now the category $\CABOOL_{\bigvee}$ of complete atomic Boolean algebras and functions preserving all suprema as well as the powerset monad $\mP=\pmonad$ on $\SET$. Clearly, $P1=2$, and, for every map $h:X\to 2$, the extension $\bar{h}:PX\to 2$ is defined by
\[
 \bar{h}(A)=1\iff \exists x\in A\,.\,h(x)=1,
\]
for all $A\subseteq X$. In other words, $\bar{h}(A)=\bigvee\{h(x)\mid x\in A\}$, and we conclude that the map $\bar{(-)}$ of Assumption \ref{ass:Monad} preserves suprema. A quick calculation shows that the monad morphism $j$ induced by $J$ is given by
\[
 j_X:PX\to\CABOOL_{\bigvee}(F'X,2),\,
A\mapsto \Phi_A:F'X\to 2,\,h\mapsto
\begin{cases}
 0 & \text{if $A_h\subseteq A^\complement$,}\\
 1 & \text{else,}
\end{cases}
\]
where $A_h=h^{-1}(1)$. Since every suprema-preserving map $\Phi:F'X\to 2$ is completely determined by the largest element $h\in F'X$ with $\Phi(h)=0$, each component of $j$ is a bijection, that is, $j$ is an isomorphism. Therefore, from Proposition \ref{prop:J} and Corollary \ref{cor:JwithOp} we obtain:
\begin{align*}
\REL\simeq\CABOOL_{\bigvee}^\op &&\text{and}&& \Coalg{P:\SET\to\SET}\simeq \CatOp{\CABOOL}{\bigvee}^\op.
\end{align*}
Here $\CatOp{\CABOOL}{\bigvee}$ denotes the category with objects complete atomic Boolean algebras $B$ equipped with a unary operation $h:B\to B$ which preserves suprema, and a morphism $h:(B,h)\to B'h')$ in $\CatOp{\CABOOL}{\bigvee}$ is a morphism $h:B\to B'$ in $\CABOOL$ with $f\cdot h=h'\cdot f$.

\item[\hskip\labelsep (2)] Similarly, if we consider the category $\CABOOL_{\top,\wedge}$ of complete atomic Boolean algebras and finite infima preserving maps instead, the adjunction
\[
 (F'\dashv G',\eta',\eps'):\CABOOL_{\top,\wedge}^\op\rightleftarrows\SET
\]
induces a monad isomorphic to the filter monad $\mF$ on $\SET$. Hence:
\begin{align*}
 \SET_\mF\simeq\CABOOL_{\top,\wedge}^\op &&\text{and}&& \Coalg{F:\SET\to\SET}\simeq\CatOp{\CABOOL}{\top,\wedge}^\op.
\end{align*}

\item[\hskip\labelsep (3)] Our next example involves $\catfont{X}=\TOP$ and $\tilde{X}=2=\{0,1\}$ the Sierpi\'nski space with $\{1\}$ open; $\catfont{B}$ is the category $\SFRM$ of spatial frames and frame-homomorphisms and  $\tilde{B}=2$ is the two-element frame. We obtain the adjunction
\[
 (F\dashv G,\eta,\eps):\SFRM^\op\rightleftarrows\TOP
\]
where $\eps_B$ is an isomorphism for every spatial frame $B$, and $\eta_X$ is an isomorphism if and only if $X$ is sober, for every topological space $X$. Similar to the previous example, we consider the category $\SFRM_{\top,\wedge}$ of spatial frames and finite infima preserving maps, and obtain a Kleisli adjunction
\[
 (F'\dashv G',\eta',\eps'):\SFRM_{\top,\wedge}^\op\rightleftarrows\TOP.
\]
It is well-known that the induced monad is isomorphic to the filter monad $\mF=\ftmonad$ on $\TOP$. In fact, $F_\tau 1$ is the Sierpi\'nski space, and
\[
 \bar{(-)}:\TOP(X,2)\to\TOP(F_\tau X,2)
\]
sends $h:X\to 2$ the characteristic map of $(U_h)^\#$ (where $U_h=h^{-1}(1)$), hence $\bar{(-)}$ preserves finite infima. By definition, the topology on $F_\tau X$ is generated by all sets $(U_h)^\#$ for $h:X\to 2$ continuous, which tells us that the point-separating source $(\bar{h}:F_\tau X\to 2)_{h\in\TOP(X,2)}$ is initial. The $X$-component of the monad morphism $j$ is given by
\[
 j_X:F_\tau X\to\SFRM_{\top,\wedge}(F'X,2),\ff\mapsto \Phi_\ff:F'X\to 2,\,h\mapsto
\begin{cases}
 1 & \text{$U_h\in\ff$,}\\
 0 & \text{else,}
\end{cases}
\]
hence $j_X$ is surjective and therefore an isomorphism. In conclusion:
\begin{align*}
 \TOP_\mF\simeq\SFRM_{\top,\wedge} &&\text{and}&& \Coalg{F:\SOB\to\SOB}\simeq\CatOp{\SFRM}{\top,\wedge}^\op.
\end{align*}

\item[\hskip\labelsep (4)]
We substitute now in the example above $\SFRM_{\top,\wedge}$ with the category $\SFRM_{\bigvee}$ of spatial frames and suprema preserving maps. We show that the monad induced by the adjunction
\[
 (F'\dashv G',\eta',\eps'):\SFRM_{\bigvee}^\op\rightleftarrows\TOP.
\]
is isomorphic to the lower Vietoris monad $\mV=\vmonad$ on $\TOP$ (see Section \ref{sect:StablyComp}). Clearly, $V1$ is the Sierpi\'nski space, and the map
\[
 \bar{(-)}:\TOP(X,2)\to\TOP(VX,2)
\]
sends $h:X\to 2$ to the characteristic map of $(U_h)^\Diamond$. Therefore $\bar{(-)}$ preserves all suprema and $VX$ has by definition the initial topology with respect to the point-separating source $(\bar{h}:VX\to 2)_{h\in\TOP(X,2)}$. Similar to the first example, the monad morphism $j$ induced by $J$ is given by
\[
 j_X:VX\to\SFRM_{\bigvee}(F'X,2),\,
A\mapsto \Phi_A:F'X\to 2,\,h\mapsto
\begin{cases}
 0 & \text{if $U_h\subseteq A^\complement$,}\\
 1 & \text{else,}
\end{cases}
\]
hence $j$ is an isomorphism and we obtain:
\begin{align*}
\TOP_\mV\simeq\SFRM_{\bigvee}^\op &&\text{and}&& \Coalg{V:\SOB\to\SOB}\simeq\CatOp{\SFRM}{\bigvee}^\op.
\end{align*}

Clearly, a morphism $X\kto Y$ in $\TOP_\mV$ corresponds to a relation $X\relto Y$, and composition in $\TOP_\mV$ corresponds to relational composition. We call a relation $r:X\relto Y$ between topological spaces \df{continuous relation} whenever the corresponding map $\mate{r}:X\to PY$ factors as $X\xrightarrow{\mate{r}}VY\hrw PY$ and, moreover, $\mate{r}:X\to VY$ is continuous. Writing $\TOPREL$ for the category of topological spaces and continuous relations, we have $\TOPREL\simeq\SFRM_{\bigvee}^\op$. The functor $\TOP\to\TOP_\mV$ corresponds to the functor $\TOP\to\TOPREL$ which sends $f:X\to Y$ to
\[
f_*:X\relto Y,\,x\,f_*\,y\iff f(x)\le y,
\]
where $\le$ refers to the underlying order of $Y$. We also note that every continuous relation $r:X\relto Y$ satisfies
\[
 (x\le x'\,r\,y'\le y)\;\Rw\; x\,r\,y,
\]
for all $x,x'\in X$ and $y,y'\in Y$.

\item[\hskip\labelsep (5)] We consider now the category $\catfont{X}=\SPEC$ of spectral spaces and spectral maps. As shown in \citep{Sto38}, $\SPEC$ is equivalent to the dual of the category $\catfont{B}=\DLAT$ of distributive lattices (with top and bottom element) and lattice homomorphisms, and this equivalence is induced by $\tilde{X}=2$ the Sierpi\'nski space and $\tilde{B}=2$ the two-element lattice. With $\DLAT_{\bot,\vee}$ denoting the category of distributive lattices and finite suprema preserving maps, we obtain a Kleisli adjunction
\[
 (F'\dashv G',\eta',\eps'):\DLAT_{\bot,\vee}^\op\rightleftarrows\SPEC.
\]
For the lower Vietoris monad $\mV=\vmonad$ on $\SPEC$, $V1$ is the Sierpi\'nski space and the map $\bar{(-)}:\SPEC(X,2)\to\SPEC(VX,2)$ is the restriction of the corresponding map of Example 4 above and therefore preserves finite suprema; $(\bar{h}:VX\to 2)_{h\in\SPEC(X,2)}$ is point-separating and $VX$ has the initial topology ($=$ initial $\SPEC$-structure). As above, the monad morphism $j$ induced by $J$ is given by
\[
 j_X:VX\to\DLAT_{\bot,\vee}(F'X,2),\,
A\mapsto \Phi_A:F'X\to 2,\,h\mapsto
\begin{cases}
 0 & \text{if $U_h\subseteq A^\complement$,}\\
 1 & \text{else,}
\end{cases}
\]
and compactness of the sets $U_h$ (for $h:X\to 2$ in $\SPEC$) assures that $j_X$ is surjective. Therefore:
\begin{align*}
 \SPEC_\mV\simeq\DLAT_{\bot,\vee}^\op &&\text{and}&& \Coalg{V:\SPEC\to\SPEC}\simeq\CatOp{\DLAT}{\bot,\vee}^\op.
\end{align*}
The above mentioned dualities were also obtained in \citep{CLP91}, \citep{Pet96} and \citep{BKR07}.
Put differently, $\SPECREL\simeq\DLAT_{\bot,\vee}^\op$ where $\SPECREL$ denotes the category of spectral spaces and spectral relations (that is, relations $r:X\relto Y$ between spectral spaces so that $\mate{r}:X\to PY$ factors as $X\xrightarrow{\mate{r}}VY\hrw PY$ and $\mate{r}:X\to VY$ is spectral). 

\item[\hskip\labelsep (6)] Finally, we describe the dualities involving Boolean algebras mentioned in the Introduction. By \citep{Sto36}, $\catfont{X}=\STONE$ is equivalent to the dual of the category $\catfont{B}=\BOOL$ of Boolean algebras and homomorphisms. As above, for the monad $\mtV=\tvmonad$ on $\STONE$ one obtains
\begin{align*}
 \STONEREL\simeq\STONE_{\mtV}\simeq\BOOL_{\bot,\vee}^\op &&\text{and}&& \Coalg{\hat{V}:\STONE\to\STONE}\simeq\CatOp{\BOOL}{\bot,\vee}^\op.
\end{align*}
Here $\STONEREL$ denotes full subcategory of $\SPECREL$ defined by all Stone spaces.
\end{trivlist}
\end{examples}

\section{Monoidal structures}

The starting point of this section is the observation that the topological product of two spectral spaces is \emph{not} their product in $\SPECREL$, as it can be seen already in the simplest case:
\[
 \SPECREL(1,1\times 1)\not\simeq\SPECREL(1,1)\times\SPECREL(1,1).
\]
The formula above suggests that the product $1\times 1$ in $\SPECREL$ should be the two-element discrete space. In fact, we show now that products in $\SPECREL$ are given by coproducts in $\SPEC$. We will need a piece of notation. For a morphism $f:X\to Y$ in $\SPEC$, we define a relation $f^*:Y\relto X$ as $y\,f^*\,x$ whenever $y\le f(x)$, for all $x\in X$ and $y\in Y$. We emphasize that $f^*$ does not need to be a spectral relation; however, if $f$ is an open map, then $f^*$ is spectral (see \citep[Proposition 8.4]{Hof12a}). Also note that, for a morphism $r:Z\relto Y$ in $\SPECREL$ and $z\in Z$, $x\in X$,
\[
 z\,(f^*\cdot r)\,x\iff\exists y\in Y\,.\,(z\,r\,y)\wedge (y\le f(x))\iff z\,r\,f(x).
\]

\begin{lemma}
For spectral spaces $X_1$ and $X_2$, the product of $X_1$ and $X_2$ in $\SPECREL$ is given by their topological sum (which is also their coproduct in $\SPEC$ and in $\SPECREL$). If $i_1:X_1\to X_1+X_2$ and $i_2:X_2\to X_1+X_2$ denote the coproduct injections, then $i_1^*:X_1+X_2\relto X_1$ and $i_2^*:X_1+X_2\relto X_2$ are the two product projections.
\end{lemma}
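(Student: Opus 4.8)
The plan is to verify the universal property of the product directly through the equivalence $\SPECREL\simeq\SPEC_\mV$ established earlier, transporting everything to a statement about the ordinary product of $VX_1$ and $VX_2$ in $\SPEC$. Recall that under this equivalence a spectral relation $r\colon Z\relto W$ corresponds to a spectral map $\mate{r}\colon Z\to VW$, that composition in $\SPECREL$ is Kleisli (relational) composition, and that the functor $\SPEC\to\SPECREL$, $f\mapsto f_*$, is, up to the equivalence, the left adjoint $F_\mV$ of the Kleisli adjunction. So I would fix an arbitrary spectral space $Z$ and aim to show that
\[
 \SPECREL(Z,X_1+X_2)\lrw\SPECREL(Z,X_1)\times\SPECREL(Z,X_2),\quad r\mapsto(i_1^*\cdot r,\,i_2^*\cdot r),
\]
is a bijection for every $Z$; this is exactly the assertion that $X_1+X_2$ with projections $i_1^*,i_2^*$ is a product in $\SPECREL$.

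First I would record that $X_1+X_2$ is again spectral (a finite topological sum of spectral spaces is spectral), so that by Lemma \ref{lem:VvsSum}, together with the fact that $VX$ is spectral whenever $X$ is, the continuous maps $f_1,f_2$ of that lemma assemble into an isomorphism $\langle f_1,f_2\rangle\colon V(X_1+X_2)\to VX_1\times VX_2$ between spectral spaces. Since $\STCOMP\hrw\TOP$ preserves limits (Theorem \ref{thm:OCHvsSTC}) and $\SPEC$ is reflective in $\STCOMP$, the topological product $VX_1\times VX_2$ — being itself spectral — is already the product in $\SPEC$; hence $\langle f_1,f_2\rangle$ exhibits $V(X_1+X_2)$ as the product of $VX_1$ and $VX_2$ in $\SPEC$, with projections induced by $f_1$ and $f_2$.

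The heart of the argument is to show that, under the correspondence $r\leftrightarrow\mate{r}$, the operation $r\mapsto i_j^*\cdot r$ corresponds to $g\mapsto f_j\circ g$. For this I would use the relational identity $z\,(f^*\cdot r)\,x\iff z\,r\,f(x)$ recorded just before the statement: applied to $f=i_j$ it gives $\mate{i_j^*\cdot r}(z)=\{x\in X_j\mid i_j(x)\in\mate{r}(z)\}=\mate{r}(z)\cap X_j=f_j(\mate{r}(z))$, so that $\mate{i_j^*\cdot r}=f_j\circ\mate{r}$. Consequently the displayed map above is carried by the bijections $\SPECREL(Z,-)\cong\SPEC(Z,V(-))$ onto
\[
 \SPEC(Z,V(X_1+X_2))\lrw\SPEC(Z,VX_1)\times\SPEC(Z,VX_2),\quad g\mapsto(f_1\circ g,\,f_2\circ g),
\]
which is a bijection precisely because $\langle f_1,f_2\rangle$ is a product cone in $\SPEC$. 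This bijection for every $Z$ is exactly the universal property, so $X_1+X_2$ with $i_1^*,i_2^*$ is the product in $\SPECREL$.

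Finally, the parenthetical claims are quick: $X_1+X_2$ is the coproduct in $\SPEC$ by construction, and since $F_\mV$ is a left adjoint it preserves coproducts, whence $X_1+X_2$ with injections $(i_1)_*,(i_2)_*$ is also the coproduct in $\SPECREL$. I expect the main obstacle to be the bookkeeping of the second paragraph — confirming that the topological product $VX_1\times VX_2$ really is the $\SPEC$-product and that $\langle f_1,f_2\rangle$ and the individual $f_j$ are genuine $\SPEC$-morphisms — rather than the relational computation, which is immediate from the recorded identity. Everything else reduces to the already-established equivalence $\SPECREL\simeq\SPEC_\mV$ and to Lemma \ref{lem:VvsSum}.
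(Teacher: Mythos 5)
Your proof is correct, and it hinges on the same key ingredient as the paper's, namely the homeomorphism $V(X_1+X_2)\simeq VX_1\times VX_2$ of Lemma~\ref{lem:VvsSum} read through the Kleisli correspondence $\SPECREL(Z,W)\cong\SPEC(Z,VW)$; but the two arguments are organised quite differently. The paper is constructive: it defines the pairing $\langle r,s\rangle$ as the relation corresponding to the composite $Z\to VX_1\times VX_2\simeq V(X_1+X_2)$, extracts the element-wise description \eqref{eq:CanRelProd}, and reads off both the projection identities and uniqueness directly from that formula (which is then reused later in the paper, e.g.\ for the relation $\langle r,s\rangle:X\relto Y+Y$ in the discussion of successors). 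You never write the pairing down element-wise; instead you prove $\mate{i_j^*\cdot r}=f_j\circ\mate{r}$ from the recorded identity $z\,(f^*\cdot r)\,x\iff z\,r\,f(x)$, so that composition with the candidate projections $i_1^*,i_2^*$ becomes, under the hom-bijections, post-composition with the components of an isomorphism onto a $\SPEC$-product, and existence and uniqueness of the pairing drop out simultaneously as bijectivity of one map. What your version buys is that the lemma is exhibited as a formal consequence of Lemma~\ref{lem:VvsSum} --- a monad turning finite coproducts into products turns base coproducts into Kleisli products --- so the argument transfers verbatim to $\STONEREL$ (and $\TOPREL$); what it costs is the extra bookkeeping that the topological product $VX_1\times VX_2$ really is the product in $\SPEC$. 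Your justification of that point is sound (the space is spectral, being homeomorphic to $V(X_1+X_2)$, and $\SPEC$ is a full subcategory of $\STCOMP$, whose inclusion into $\TOP$ preserves limits), and in fairness the paper relies on it silently as well when it calls the composite $Z\xrightarrow{\langle\mate{r},\mate{s}\rangle}VX_1\times VX_2\simeq V(X_1+X_2)$ a morphism in $\SPEC$. Your dispatch of the parenthetical coproduct claims via preservation of colimits by the Kleisli left adjoint is also correct; the paper does not argue these at all.
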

\begin{proof}
Given morphisms $r:Z\relto X_1$ and $s:Z\relto X_2$ in $\SPECREL$, let
\[
 \langle r,s\rangle:Z\relto X_1+X_2
\]
be the spectral relation which corresponds to the composite
\[
 Z\xrightarrow{\,\langle\mate{r},\mate{s}\rangle} V(X_1)\times V(X_2)\simeq V(X_1+X_2)
\]
in $\SPEC$ (see Lemma \ref{lem:VvsSum}). Element-wise, for $z\in Z$ and $x\in X_1+X_2$, one has
\begin{equation}\label{eq:CanRelProd}
 z\,\langle r,s\rangle\,x\iff (x\in X_1\wedge z\,r\,x)\vee(x\in X_2\wedge z\,s\,x);
\end{equation}
and this gives at once $i_1^*\cdot\langle r,s\rangle=r$ and $i_2^*\cdot\langle r,s\rangle=s$. Finally, a spectral relation $t:Z\relto X_1+X_2$ with $i_1^*\cdot t=r$ and $i_2^*\cdot t=s$ is necessarily described by the right hand side of \eqref{eq:CanRelProd}, hence $t=\langle r,s\rangle$.
\end{proof}

A similar result holds for $\STONEREL$, and the empty space is an initial and a terminal object in both $\SPECREL$ and $\STONEREL$. 

\begin{proposition}
The categories $\SPECREL$, $\STONEREL$, $\DLAT_{\bot,\vee}$ and $\BOOL_{\bot,\vee}$ have finite products and finite coproducts which coincide. 
\end{proposition}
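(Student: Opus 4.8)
The plan is to deal first with the relational categories $\SPECREL$ and $\STONEREL$, showing directly that in each of them finite products and finite coproducts coincide, and then to transport this fact along the dual equivalences $\SPECREL\simeq\DLAT_{\bot,\vee}^\op$ and $\STONEREL\simeq\BOOL_{\bot,\vee}^\op$ obtained in Examples (5) and (6).

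For $\SPECREL$ the binary case is precisely the preceding lemma: the product of $X_1$ and $X_2$ is their topological sum $X_1+X_2$, which is simultaneously their coproduct in $\SPEC$ and in $\SPECREL$. For the nullary case I invoke the remark that the empty space is both an initial and a terminal object of $\SPECREL$, so that the empty product (terminal object) agrees with the empty coproduct (initial object). Consequently all finite products and finite coproducts in $\SPECREL$ coincide. The argument for $\STONEREL$ is the same once one checks the two supporting facts: a finite topological sum of Stone spaces is again a Stone space (a finite disjoint union of compact zero-dimensional Hausdorff spaces is of the same type), and $\hat{V}(X_1+X_2)\simeq\hat{V}X_1\times\hat{V}X_2$ by a computation word-for-word analogous to Lemma~\ref{lem:VvsSum}, now using the generating opens of $\hat{V}$. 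With these in hand, the coproduct injections are open, hence the projections $i_1^*,i_2^*$ are spectral, and the same reasoning shows that finite products and finite coproducts also coincide in $\STONEREL$.

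It then remains to transfer the conclusion to the algebraic side. Any equivalence of categories preserves whatever limits and colimits exist, and passing to the opposite category interchanges products with coproducts. Hence, under $\SPECREL\simeq\DLAT_{\bot,\vee}^\op$, a finite product in $\SPECREL$ corresponds to a finite coproduct in $\DLAT_{\bot,\vee}$ and a finite coproduct in $\SPECREL$ to a finite product in $\DLAT_{\bot,\vee}$; since the two coincide in $\SPECREL$, they coincide in $\DLAT_{\bot,\vee}$. The same argument applied to $\STONEREL\simeq\BOOL_{\bot,\vee}^\op$ yields the statement for $\BOOL_{\bot,\vee}$.

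I expect the only non-automatic step to be the $\STONEREL$ part, specifically verifying the homeomorphism $\hat{V}(X_1+X_2)\simeq\hat{V}X_1\times\hat{V}X_2$ for the modified topology of $\hat{V}$ and confirming that the product projections remain spectral; everything on the algebraic side is then formal bookkeeping about opposites and equivalences.
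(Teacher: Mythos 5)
Your proof is correct and follows essentially the same route as the paper: the preceding lemma together with the remark that the empty space is initial and terminal settles $\SPECREL$ (and, analogously, $\STONEREL$), and the dual equivalences $\SPECREL\simeq\DLAT_{\bot,\vee}^\op$ and $\STONEREL\simeq\BOOL_{\bot,\vee}^\op$ transfer the coincidence of finite products and coproducts to the algebraic side, where the opposite-category swap is harmless precisely because the two notions agree. The one inessential detour is your re-verification of $\hat{V}(X_1+X_2)\simeq\hat{V}X_1\times\hat{V}X_2$: since $\STONEREL$ is by definition a \emph{full} subcategory of $\SPECREL$ and a finite topological sum of Stone spaces is again a Stone space, the (co)products already computed in $\SPECREL$ are automatically (co)products in $\STONEREL$, so that computation can be skipped.
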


Note that finite (co)products in the categories $\DLAT_{\bot,\vee}$ and $\BOOL_{\bot,\vee}$ are given by products in $\DLAT$ and $\BOOL$ respectively. For objects $X$ and $Y$ in $\SPECREL$, we denote their topological product as $X\otimes Y$. For spectral relations $r:X\relto X'$ and $s:Y\relto Y'$, we define a relation $r\otimes s:X\otimes Y\relto X'\otimes Y'$ where $(x,y)\,(r\otimes s)\,(x',y')$ whenever $x\,r\,x'$ and $r\,s\,y'$. Then $r\otimes s$ is indeed spectral since, with $\mate{r}:X\to V(X')$ and $\mate{s}:Y\to V(Y')$ denoting the corresponding spectral maps, $r\otimes s$ corresponds to the map
\[
 X\times Y\xrightarrow{\mate{r}\times \mate{s}}V(X')\times V(Y')\xrightarrow{\,\Pi\,} V(X'\times Y')
\]
in $\SPEC$ (see Lemma \ref{lem:VvsProd}). One easily verifies that this construction defines a functor
\[
 \otimes:\SPECREL\times\SPECREL\to\SPECREL.
\]
We shall see that on the algebraic side this operation corresponds to a tensor product which represents bimorphisms.

For objects $L,M,N$ in $\DLAT_{\bot,\vee}$, we call a map $f:L\times M\to N$ a \df{bimorphism} whenever, for all $x\in L$, $f(x,-):M\to N$ preserves finite suprema and, for all $y\in M$, $f(-,y):L\to N$ preserves finite suprema. Note that a bimorphism is necessarily monotone. We say that an object $P$ in $\DLAT_{\bot,\vee}$ \df{represents bimorphisms} $f:L\times M\to N$ whenever there is a bimorphism $p:L\times M\to P$ such that, for every bimorphism $f:L\times M\to N$, there exists a unique morphism $\bar{f}:P\to N$ with $\bar{f}\cdot p=f$. If it exists, $P$ is unique up to isomorphism, and we denote such an object as $L\otimes M$ and refer to $p:L\times M\to L\otimes M$ as the \df{tensor product} of $L$ and $M$.

\begin{lemma}
For spectral spaces $X$ and $Y$, the tensor product of $JX$ and $JY$ exists and is given by
\[
 p:JX\times JY\to J(X\otimes Y),\,(A,B)\mapsto A\times B.
\]
\end{lemma}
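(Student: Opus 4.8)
The plan is to show that the map $p:JX\times JY\to J(X\otimes Y)$, $(A,B)\mapsto A\times B$, is a bimorphism representing bimorphisms in $\DLAT_{\bot,\vee}$. Recall from Example 5 that $JX=F'X$ is the distributive lattice $\SPEC(X,2)$ of compact open subsets of $X$, ordered by inclusion, with finite suprema given by finite unions and bottom given by the empty set. Under this identification an element of $JX$ is a compact open $U\subseteq X$, and the tensor candidate sends a pair of compact opens $(U,V)$ to the compact open rectangle $U\times V\subseteq X\otimes Y = X\times Y$. First I would verify that $p$ is a bimorphism: for fixed compact open $U\subseteq X$ the assignment $V\mapsto U\times V$ preserves $\varnothing$ and finite unions (since $U\times(V_1\cup V_2)=(U\times V_1)\cup(U\times V_2)$ and $U\times\varnothing=\varnothing$), and symmetrically in the other variable; this is an elementary set-theoretic computation.

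The substance is the universal property. Given any bimorphism $f:JX\times JY\to N$ into an object $N$ of $\DLAT_{\bot,\vee}$, I must produce a unique finite-suprema-preserving map $\bar f:J(X\otimes Y)\to N$ with $\bar f\cdot p=f$, i.e.\ $\bar f(U\times V)=f(U,V)$ for all compact opens $U\subseteq X$, $V\subseteq Y$. The key structural fact I would exploit is that the compact open subsets of the product $X\otimes Y=X\times Y$ are exactly the \emph{finite unions of rectangles} $U\times V$ with $U$ compact open in $X$ and $V$ compact open in $Y$; indeed the rectangles form a basis of compact opens, and compactness forces any compact open set to be a finite union of basic opens, which can then be taken to be rectangles. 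Thus every $W\in J(X\otimes Y)$ can be written $W=\bigcup_{i=1}^n U_i\times V_i$, and forcing $\bar f$ to preserve finite suprema dictates
\[
 \bar f(W)=\bigvee_{i=1}^n f(U_i,V_i).
\]
This formula forces uniqueness immediately, so the entire weight of the proof rests on showing that the value is \emph{well defined}, that is, independent of the chosen representation of $W$ as a finite union of rectangles, and that the resulting map genuinely preserves finite suprema and bottom.

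The hard part will be well-definedness, since a single compact open set of the product admits many different rectangle decompositions, and $f$ is only assumed separately-suprema-preserving rather than fully additive. My plan for this is to pass to a common refinement: given two decompositions $\bigcup_i U_i\times V_i=\bigcup_j U_j'\times V_j'$, I would intersect all the first coordinates and all the second coordinates to obtain a finite \emph{grid} of pairwise-disjoint (or at least lattice-generating) compact opens $\{P_a\}$ in $X$ and $\{Q_b\}$ in $Y$ such that every rectangle appearing in either decomposition is a union of grid cells $P_a\times Q_b$; here I use that finite intersections of compact opens are compact open in a spectral space, so $JX$ and $JY$ are bounded sublattices closed under meet. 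Using bimorphism-additivity in each variable I would rewrite each $f(U_i,V_i)$ as the join of $f(P_a,Q_b)$ over the cells contained in $U_i\times V_i$, and similarly for the primed decomposition; both then reduce to the join of $f(P_a,Q_b)$ over exactly the cells whose union is $W$, proving the two values agree. The technical care lies in managing that the grid cells need not be disjoint and that $f$ need not preserve arbitrary (non-disjoint) suprema beyond finite ones — but finiteness of all decompositions keeps everything within the scope of the bimorphism hypothesis, so repeated application of additivity in each slot suffices. Once well-definedness is secured, checking $\bar f(\varnothing)=\bot$ and $\bar f(W_1\cup W_2)=\bar f(W_1)\vee\bar f(W_2)$ is routine by merging representing families, and $\bar f\cdot p=f$ holds by taking the one-term decomposition $W=U\times V$.
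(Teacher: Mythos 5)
Your overall route --- represent each compact open $W\subseteq X\otimes Y$ as a finite union of rectangles, define $\bar f(W)=\bigvee_{i=1}^n f(U_i,V_i)$ by the forced formula, and then prove well-definedness --- is genuinely different from the paper's, and it would have the merit of working with an arbitrary $N$ in $\DLAT_{\bot,\vee}$ rather than reducing to $N=JZ$. But as written it has a real gap at its central step. Reducing both decompositions to ``the join of $f(P_a,Q_b)$ over the grid cells contained in $W$'' only gives one inequality for free: each $U_i\times V_i$ is itself a cell inside $W$, so $\bigvee_i f(U_i,V_i)$ is below the join over all cells. For the reverse you need that every grid cell $P\times Q\subseteq W=\bigcup_i U_i\times V_i$ satisfies $f(P,Q)\le\bigvee_i f(U_i,V_i)$, and this is \emph{not} a consequence of monotonicity or of rewriting each $f(U_i,V_i)$ as a join of smaller cells, because a cell contained in the union need not lie in any single rectangle of the decomposition: for $X=Y=\{0,1\}$ discrete and the decomposition $W=(\{0\}\times Y)\cup(\{1\}\times Y)$, the cell $X\times Y$ (which does occur in the grid when the competing decomposition is $(X\times\{0\})\cup(X\times\{1\})$) is contained in no rectangle of the first decomposition. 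Your fallback of pairwise-disjoint cells is also unavailable: compact opens of a spectral space are not closed under complements, so no disjoint refinement exists inside $JX$. Thus the statement you actually need is the covering lemma: if $A\times B\subseteq\bigcup_{i=1}^n U_i\times V_i$ with all sets compact open, then $f(A,B)\le\bigvee_{i=1}^n f(U_i,V_i)$. ``Repeated application of additivity in each slot'' is an appeal to exactly this lemma, not a proof of it, and proving it is where essentially all of the paper's effort is concentrated.

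For comparison, the paper sidesteps well-definedness altogether: it reduces to $N=JZ$ via Stone duality, defines $g(W)=\bigcup\{f(A,B)\mid A\in JX,\,B\in JY,\,A\times B\subseteq W\}$ inside the powerset $PZ$ (so no choice of decomposition is ever made), and then proves that $g$ preserves binary unions by a refinement-and-compactness argument: given $A\times B\subseteq W\cup W'$, it covers $A=A_{x_1}\cup\dots\cup A_{x_n}$ and $B=B_{y_1}\cup\dots\cup B_{y_m}$ by compact opens chosen so that each $A_{x_k}\times B_{y_l}$ lies inside a single rectangle of the given decompositions of $W$ and $W'$, and concludes using the bimorphism property that $f(A,B)=\bigcup_{k,l}f(A_{x_k},B_{y_l})$. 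That argument is precisely a proof of the lemma you are missing, and your proof becomes complete once you insert it --- either in the paper's form, or purely lattice-theoretically: for $S\subseteq\{1,\dots,n\}$ put $D_S=A\cap\bigcap_{i\in S}U_i$; then $A$ is the union of those $D_S$ for which $B\subseteq\bigcup_{i\in S}V_i$, whence $f(A,B)=\bigvee_S f(D_S,B)=\bigvee_S\bigvee_{i\in S}f(D_S,B\cap V_i)\le\bigvee_i f(U_i,V_i)$. With that lemma in hand, your remaining steps (forced uniqueness, preservation of finite joins by merging decompositions, and $\bar f\cdot p=f$) are correct.
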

\begin{proof}
For each spectral space $X$, we identify the elements of $JX$ with the compact open subsets of $X$. Let $X$ and $Y$ be spectral spaces. Clearly, $p:JX\times JY\to J(X\otimes Y)$ is a bimorphism. Let $f:SX\times SY\to N$ be a bimorphism. It is enough to consider the case $N=SZ$, for a spectral space $Z$. We define a map
\[
 g:J(X\otimes Y)\to PZ,\,W\mapsto\bigcup\{f(A,B)\mid A\in JX, B\in JY,A\times B\subseteq W\}.
\]
Then $g$ is monotone, $g(\varnothing)=\varnothing$ and, for all $A\in JX$ and $B\in JY$, $g(A\times B)=f(A,B)$. We show now that $g$ preserves binary suprema; since $g$ is monotone, it is enough to show $g(W\cup W')\subseteq g(W)\cup g(W')$. To this end, fix $A\in JX$ and $B\in JY$ with $A\times B\subseteq W\cup W'$. Since $W$ and $W'$ are compact opens, there are finite sets $I$ and $I'$ and compact opens $A_i,A_j'\subseteq X$ and $B_i,B_j'\subseteq Y$ ($i\in I$, $j\in I'$) with
\begin{align}\label{eq:WFiniteUnion}
 W=\bigcup_{i\in I}A_i\times B_i &&\text{and}&& W'=\bigcup_{j\in I'}A_j'\times B_j'.
\end{align}
For $x\in A$, put
\begin{align*}
 A_x &=A\cap\bigcap\{A_i\mid i\in I,x\in A_i\}\cap\bigcap\{A'_j\mid j\in I',x\in A_j'\}
\intertext{and, for each $y\in B$, put}
 B_y &=B\cap\bigcap\{B_i\mid i\in I,y\in B_i\}\cap\bigcap\{B'_j\mid j\in I',y\in B_j'\}.
\end{align*}
Then $x\in A_x$, $y\in B_y$ and $A_x$ and $B_y$ are open and compact. Hence,
\begin{align*}
 A=\bigcup_{x\in A}A_x=A_{x_1}\cup\ldots\cup A_{x_n} &&\text{and}&&
 B=\bigcup_{y\in B}B_y=B_{y_1}\cup\ldots\cup B_{y_m},
\end{align*}
for finitely many elements $x_1,\ldots,x_n\in A$ and $y_1,\ldots,y_m\in B$. Since $f$ is a bimorphism, we obtain
\[
 f(A,B)=\bigcup\{f(A_{x_i},B_{y_j})\mid 1\le i\le n,1\le j\le m\}.
\]
Let now $z\in f(A,B)$. Then $z\in f(A_{x_k},B_{y_l})$, for some $1\le k\le n$ and $1\le l\le m$; and $(x_k,y_l)\in A\times B\subseteq W\cup W'$. Without loss of generality we assume $(x_k,y_l)\in W$. We have $x_k\in A_i$ and $y_l\in B_i$ for some $i\in I$, hence $A_{x_k}\subseteq A_i$ and $B_{y_l}\subseteq B_i$ and therefore 
\[
 z\in f(A_{x_k},B_{y_l})\subseteq f(A_i,B_i)\subseteq g(W).
\]
This proves $g(W\cup W')\subseteq g(W)\cup g(W')$. From preservation of finite suprema it also follows that $g$ takes values in $JZ$ since (using \eqref{eq:WFiniteUnion}) 
\[g(W)=\bigcup_{i\in I}g(A_i\times B_i)=\bigcup_{i\in I}f(A_i,B_i)\]
is compact (and open). This also shows that $g$ is the only finite suprema preserving map $g:J(X\otimes Y)\to PZ$ with $g\cdot p=f$, and the corestriction $\bar{f}:J(X\otimes Y)\to JZ$ of $g$ to $JZ$ is the required unique map with $\bar{f}\cdot p=f$.
\end{proof}

\begin{theorem}
For all distributive lattices $L$ and $M$, their tensor product in $\DLAT_{\bot,\vee}$ exists. Moreover, the functor $J:\SPECREL\to \DLAT_{\bot,\vee}$ satisfies $J(X\otimes Y)\simeq JX\otimes JY$, for all spectral spaces $X$ and $Y$. 
\end{theorem}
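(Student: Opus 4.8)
The plan is to deduce the statement from the preceding lemma by exploiting that $J$ is a (dual) equivalence. Recall that the fifth example above exhibits $J$ as a dual equivalence $\SPECREL\simeq\DLAT_{\bot,\vee}^\op$; in particular $J$ is essentially surjective on objects, so every distributive lattice $L$ in $\DLAT_{\bot,\vee}$ admits a spectral space $X$ with $L\simeq JX$. The preceding lemma already constructs the tensor product of two lattices of the special form $JX$, $JY$, identifying it with $J(X\otimes Y)$. The whole task therefore reduces to transporting this construction along arbitrary isomorphisms and then reading off the preservation statement.

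First I would record that the tensor product, being defined by the universal property of representing bimorphisms, is invariant under isomorphism in each of its two arguments. Concretely, if $\phi:L\to L'$ and $\psi:M\to M'$ are isomorphisms in $\DLAT_{\bot,\vee}$, then precomposition with $\phi\times\psi$ sets up a bijection, natural in $N$, between the bimorphisms $L'\times M'\to N$ and the bimorphisms $L\times M\to N$; here one uses that $\phi$ and $\psi$, being morphisms of $\DLAT_{\bot,\vee}$, preserve finite suprema, so that the defining separate suprema-preservation is respected. Hence a representing object for the bimorphisms out of $L'\times M'$ also represents those out of $L\times M$, so that $L\otimes M$ exists as soon as $L'\otimes M'$ does, and then $L\otimes M\simeq L'\otimes M'$. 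This step is entirely formal.

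To finish, given arbitrary $L$ and $M$ I would choose spectral spaces $X$, $Y$ with $L\simeq JX$ and $M\simeq JY$ as above. By the preceding lemma the tensor product $JX\otimes JY$ exists and equals $J(X\otimes Y)$, and by the invariance just established $L\otimes M$ then exists with $L\otimes M\simeq J(X\otimes Y)$; this proves existence for all distributive lattices. The asserted compatibility $J(X\otimes Y)\simeq JX\otimes JY$ for spectral spaces $X$ and $Y$ is nothing but a restatement of the preceding lemma. I expect no real obstacle at this stage: the substantive analysis — the explicit description of the representing map and the verification that it preserves finite suprema and takes values in the compact opens — was already carried out in the preceding lemma, so what remains is only the routine isomorphism-invariance of a universal construction combined with the essential surjectivity of the duality.
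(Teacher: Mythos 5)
Your proposal is correct and matches the paper's intended argument: the paper states this theorem without proof, treating it as an immediate consequence of the preceding lemma together with the duality $\SPECREL\simeq\DLAT_{\bot,\vee}^\op$ from Example~(5), which gives essential surjectivity of $J$. Your two ingredients --- the formal isomorphism-invariance of the representing object and the choice of $X$, $Y$ with $L\simeq JX$, $M\simeq JY$ --- are exactly the details the paper leaves to the reader.
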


All what was said above can be restricted to Hausdorff spaces and Boolean algebras.

\begin{theorem}
For all Boolean algebras $L$ and $M$, their tensor product in $\BOOL_{\bot,\vee}$ exists and the functor $J:\STONEREL\to \BOOL_{\bot,\vee}$ satisfies $J(X\otimes Y)\simeq JX\otimes JY$, for all Stone spaces $X$ and $Y$. 
\end{theorem}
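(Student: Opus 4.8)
The plan is to mirror the proofs of the two preceding results, now over the base category $\STONE$ and the duality $\STONEREL\simeq\BOOL_{\bot,\vee}^\op$ of Example 6. Since $J:\STONEREL\to\BOOL_{\bot,\vee}^\op$ is an equivalence it is essentially surjective, so every pair of Boolean algebras $L,M$ is isomorphic to $JX,JY$ for suitable Stone spaces $X,Y$. It therefore suffices to establish the Stone analogue of the preceding Lemma: for Stone spaces $X$ and $Y$ the tensor product of $JX$ and $JY$ in $\BOOL_{\bot,\vee}$ exists and is given by $p:JX\times JY\to J(X\otimes Y),\,(A,B)\mapsto A\times B$. Both the existence of tensor products of arbitrary Boolean algebras and the compatibility $J(X\otimes Y)\simeq JX\otimes JY$ then follow immediately.

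First I would record the two structural facts that make the Stone case identical in form to the spectral one treated above. Since a topological product of compact, Hausdorff, totally disconnected spaces is again of this type, $X\otimes Y$ is once more a Stone space; and for any Stone space the compact open subsets featuring in the previous Lemma are exactly the clopen subsets, so that $JX$, $JY$ and $J(X\otimes Y)$ are the Boolean algebras of clopens. Second, the decomposition \eqref{eq:WFiniteUnion} survives: a clopen $W\subseteq X\times Y$ is open, hence a union of clopen rectangles $A\times B$ (these form a basis), and simultaneously closed in a compact space, hence a \emph{finite} such union. This finite-rectangle decomposition is precisely the ingredient exploited in the proof of the preceding Lemma.

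With these facts available, the argument of the preceding Lemma transfers verbatim. Given a bimorphism $f:JX\times JY\to N$ in $\BOOL_{\bot,\vee}$, I would first reduce to the case $N=JZ$ for a Stone space $Z$, again by essential surjectivity of $J$; then define $g:J(X\otimes Y)\to PZ$ by $W\mapsto\bigcup\{f(A,B)\mid A\in JX,\,B\in JY,\,A\times B\subseteq W\}$ and run the same finite-cover computation, with clopens in place of compact opens, to show that $g$ preserves finite suprema, that $g$ corestricts to $JZ$ (a finite union of clopens being clopen), and that this corestriction $\bar f$ is the unique $\BOOL_{\bot,\vee}$-morphism with $\bar f\cdot p=f$. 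The intermediate sets $A_x$ and $B_y$ used there are finite intersections of clopens and hence again clopen, so every step remains inside the Boolean setting. This exhibits $J(X\otimes Y)$ as the representing object, proving the compatibility statement and, via essential surjectivity, the existence of tensor products for all Boolean algebras.

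The one point demanding care, and the step I expect to be the real obstacle, is ensuring that the whole construction genuinely stays in $\BOOL_{\bot,\vee}$ rather than merely in the ambient $\DLAT_{\bot,\vee}$: one must check that $J(X\otimes Y)$ is an honest Boolean algebra, which holds exactly because $X\otimes Y$ is Stone, and that the reduction to targets $N=JZ$ with $Z$ Stone exhausts all bimorphisms into Boolean algebras, so that the universal property is tested against the correct class of objects. Granting these verifications, the passage from the distributive-lattice theorem to the Boolean one is precisely the restriction announced in the remark preceding the statement.
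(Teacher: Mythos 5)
Your proposal is correct and follows essentially the same route as the paper, whose entire proof is the remark that ``all what was said above can be restricted to Hausdorff spaces and Boolean algebras'': you simply flesh out that restriction, checking that $X\otimes Y$ is again Stone, that compact opens become clopens, that the finite-rectangle decomposition survives, and that the universal property may be tested against targets $JZ$ with $Z$ Stone via the equivalence $\STONEREL\simeq\BOOL_{\bot,\vee}^\op$. These are exactly the verifications the paper leaves implicit, so there is nothing to add.
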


Hence, a spectral relation $X\relto X\otimes X$ corresponds to a morphism $B\otimes B\to B$ in $\DLAT_{\bot,\vee}$, which can be also seen as a map $B\times B\to B$ preserving finite infima in each variable (the corresponding result for Boolean algebras can be found in \citep{Cel03}). In fact, the (monoidal) structure on $\SPECREL$ and $\STONEREL$ can be used to express properties of relations, and preservation properties of $J$ allow to translate these properties systematically into algebraic properties of the corresponding morphism in $\DLAT_{\bot,\vee}$ respectively $\BOOL_{\bot,\vee}$. For instance, the unique map $!:X\to 1$ of a Stone spaces $X$ corresponds to the unique map $2\to A$ in $\BOOL$ sending $0$ to $\bot$ and $1$ to $\top$, and the diagonal map $\Delta:X\to X\times X$ corresponds to the map $A\otimes A\to A$ induced by the bimorphism $\wedge:A\times A\to A$. A morphism $r:X\relto Y$ in $\STONEREL$ is a total relation whenever
\begin{align*}
 \forall x\in X\exists y\in Y\,.\,x\,r\,y, &&\text{which is equivalent to}&&
 \xymatrix{X\ar[r]|-{\object@{|}}^r\ar[dr]|-{\object@{|}}_{!_*} & Y\ar[d]|-{\object@{|}}^{!_*} \\ & 1}
\end{align*}
commutes in $\STONEREL$. Hence, $r$ is total if and only if the corresponding morphism $f:B\to A$ in $\BOOL_{\bot,\vee}$ makes the diagram
\begin{align*}
 \xymatrix{B\ar[r]^f & A\\ 2\ar[u]\ar[ur]}
\end{align*}
commute, that is, $f$ preserves the top-element. Slightly more general, given two morphisms $r,s:X\relto Y$ in $\STONEREL$, we can express the property that each $x\in X$ has a ``successor'' for at least one of these relations (i.e.\ $\forall x\in X\exists y\in Y\,.\,(x\,r\,y)\vee(x\,s\,y)$) by requiring that
\[
 \xymatrix{X\ar[r]|-{\object@{|}}^-{\langle r,s\rangle} & Y+Y\ar[r]|-{\object@{|}}^-{!_*} & 1}
\]
is equal to $!_*:X\relto 1$; and this means precisely that the corresponding morphisms $f,g:B\to A$ in $\BOOL_{\bot,\vee}$ must make the diagram
\[
 \xymatrix{2\ar[r]\ar[dr] & B\times B\ar[d]^{[f,g]}\\ & A}
\]
commute, that is, $f(\top)\vee g(\top)=\top$. Similarly, $r:X\relto Y$ in $\STONEREL$ is a partial map if and only if the diagram
\[
 \xymatrix{X\otimes X\ar[r]|-{\object@{|}}^{r\otimes r} & Y\otimes Y\\
 X\ar[u]|-{\object@{|}}^{\Delta_*}\ar[r]|-{\object@{|}}_r & Y\ar[u]|-{\object@{|}}_{\Delta_*}}
\]
commutes in $\STONEREL$, hence partial maps correspond to binary infima preserving morphisms $f:A\to B$ in $\BOOL_{\bot,\vee}$. Finally, a morphism $r:X\relto Y$ in $\SPECREL$ is a partial map whenever, for each $x\in X$, the closed set $\{y\in Y\mid x\,r\,y\}$ is either empty or has a smallest element with respect to the underlying order of $Y$.
\begin{lemma}
Let $r:X\relto Y$ be in $\SPECREL$. Then the following assertions are equivalent.
\begin{eqcond}
\item The relation $r$ is a partial map.
\item For each $x\in X$, the closed set $\{y\in Y\mid x\,r\,y\}$ is either empty or down-directed.
\item The diagram
\[
 \xymatrix{X\otimes X\ar[r]|-{\object@{|}}^{r\otimes r} & Y\otimes Y\\
 X\ar[u]|-{\object@{|}}^{\Delta_*}\ar[r]|-{\object@{|}}_r & Y\ar[u]|-{\object@{|}}_{\Delta_*}}
\]
commutes in $\SPECREL$.
\end{eqcond}
\end{lemma}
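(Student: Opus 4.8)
The plan is to isolate first the single structural fact that makes all three conditions interact, namely that a spectral relation is in particular a continuous relation and hence satisfies the compatibility property $(x\le x'\,r\,y'\le y)\Rw x\,r\,y$ recorded in Example~(4). Writing $C_x=\{y\in Y\mid x\,r\,y\}$ for the closed set of $r$-successors of $x$, and recalling that the underlying order is $a\le b\iff b\in\overline{\{a\}}$, I would use this compatibility repeatedly to simplify the composites occurring in (iii).

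For (i)$\RLw$(ii) the forward implication is immediate, since a set possessing a least element is trivially down-directed. For (ii)$\Rw$(i) the crucial point is a topological one: a non-empty down-directed closed set $C$ is irreducible. Indeed, suppose $C\subseteq A\cup B$ with $A,B$ closed and $C\not\subseteq A$; picking $y_1\in C\setminus A$, then for any $y_2\in C$ a common lower bound $y_3\in C$ of $y_1,y_2$ lies in $A$ or in $B$; the case $y_3\in A$ is excluded since $y_3\le y_1$ would force $y_1\in\overline{\{y_3\}}\subseteq A$, so $y_3\in B$ and hence $y_2\in\overline{\{y_3\}}\subseteq B$, giving $C\subseteq B$. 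As every spectral space is sober, $C_x=\overline{\{c\}}$ for a unique point $c$, and then $c\le y$ for every $y\in C_x$; thus $c$ is the required smallest element.

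For (ii)$\RLw$(iii) I would compute the two legs of the square element-wise. Unwinding the definitions of $\Delta_*$, of $r\otimes r$, and of relational composition (with the componentwise order on $X\otimes X$), and collapsing the existential quantifier over $X\otimes X$ by compatibility, one gets that $x\,[(r\otimes r)\cdot\Delta_*]\,(y_1,y_2)$ holds exactly when $x\,r\,y_1$ and $x\,r\,y_2$. On the other hand $x\,[\Delta_*\cdot r]\,(y_1,y_2)$ holds exactly when there is $y\in Y$ with $x\,r\,y$, $y\le y_1$ and $y\le y_2$. Both composites are composites of spectral relations, so equality of morphisms in $\SPECREL$ is equality of relations; moreover compatibility shows the second relation is always contained in the first, so the square commutes precisely when the reverse inclusion holds, that is, when every pair $y_1,y_2\in C_x$ admits a lower bound inside $C_x$. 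This is exactly condition (ii).

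The main obstacle is the implication (ii)$\Rw$(i): the computations feeding (ii)$\RLw$(iii) and the implication (i)$\Rw$(ii) are routine unwindings of the definitions, but the passage from mere down-directedness to an actual least element is genuinely topological and rests on sobriety of $Y$ through the irreducibility argument above. I would therefore state that step with care, since it is the only place where the hypothesis that $Y$ is spectral (rather than merely ordered) is used.
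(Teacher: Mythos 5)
Your proof is correct and follows essentially the same route as the paper's: (i)$\Rw$(ii) trivially, (ii)$\Lrlw$(iii) by unwinding the relational composites using the compatibility property of continuous relations, and (ii)$\Rw$(i) by observing that a non-empty down-directed closed set is irreducible and hence, by sobriety of $Y$, of the form $\overline{\{y_0\}}=\{y\mid y_0\le y\}$. The only difference is one of detail: you write out the irreducibility argument and the element-wise computation of the two legs of the square, both of which the paper asserts without proof.
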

\begin{proof}
Clearly (i) implies (ii), and (iii) is just a reformulation of (ii). To see (ii)$\Rw$(i), let $x\in X$ with $\{y\in Y\mid x\,r\,y\}$ down-directed. Then the closed set $\{y\in Y\mid x\,r\,y\}$ is irreducible and, since $Y$ is sober, is of the form $\overline{\{y_0\}}=\{y\in Y\mid y_0\le y\}$ for some $y_0\in Y$.
\end{proof}

We conclude that partial maps $r:X\relto Y$ in $\SPECREL$ correspond to binary infima preserving morphisms $f:A\to B$ in $\DLAT_{\bot,\vee}$.

\def\cprime{$'$}


\begin{thebibliography}{38}\setlength{\itemsep}{1ex}\small
\providecommand{\natexlab}[1]{#1}
\providecommand{\url}[1]{\texttt{#1}}
\providecommand{\urlprefix}{URL }
\providecommand{\eprint}[2][]{\url{#2}}

\bibitem[{Ad{\'a}mek \emph{et~al.}(1990)Ad{\'a}mek, Herrlich and
  Strecker}]{AHS90}
\textsc{Ad{\'a}mek, J.}, \textsc{Herrlich, H.} and \textsc{Strecker, G.~E.}
  (1990), \emph{Abstract and concrete categories: {T}he joy of cats}, Pure and
  Applied Mathematics (New York), John Wiley \& Sons Inc., New York, xiv+482
  pages, {Republished in: Reprints in Theory and Applications of Categories,
  No. 17 (2006) pp. 1--507},
  \eprint{http://tac.mta.ca/tac/reprints/articles/17/tr17.pdf}.

\bibitem[{Beck(1967)}]{Bec67}
\textsc{Beck, J.~M.} (1967), \emph{Triples, algebras and cohomology}, Ph.D.
  thesis, Columbia University, {Republished in: Reprints in Theory and
  Applications of Categories, No. 2 (2003) pp 1-59},
  \eprint{http://www.tac.mta.ca/tac/reprints/articles/2/tr2.pdf}.

\bibitem[{Bonsangue \emph{et~al.}(2007)Bonsangue, Kurz and Rewitzky}]{BKR07}
\textsc{Bonsangue, M.~M.}, \textsc{Kurz, A.} and \textsc{Rewitzky, I.~M.}
  (2007), Coalgebraic representations of distributive lattices with operators,
  \emph{Topology Appl.} \textbf{154}~(4), 778--791.

\bibitem[{Celani(2003)}]{Cel03}
\textsc{Celani, S.} (2003), Generalized join-hemimorphisms on {B}oolean
  algebras, \emph{Int. J. Math. Math. Sci.} ~(11), 681--693,
  \eprint{http://eudml.org/doc/50418}.

\bibitem[{Cignoli \emph{et~al.}(1991)Cignoli, Lafalce and Petrovich}]{CLP91}
\textsc{Cignoli, R.}, \textsc{Lafalce, S.} and \textsc{Petrovich, A.} (1991),
  Remarks on {P}riestley duality for distributive lattices, \emph{Order}
  \textbf{8}~(3), 299--315.

\bibitem[{Clark and Davey(1998)}]{CD98}
\textsc{Clark, D.~M.} and \textsc{Davey, B.~A.} (1998), \emph{Natural dualities
  for the working algebraist}, volume~57 of \emph{Cambridge Studies in Advanced
  Mathematics}, Cambridge University Press, Cambridge, xii+356 pages.

\bibitem[{Clementino and Tholen(1997)}]{CT97}
\textsc{Clementino, M.~M.} and \textsc{Tholen, W.} (1997), A characterization
  of the {V}ietoris topology, in \emph{Proceedings of the 12th {S}ummer
  {C}onference on {G}eneral {T}opology and its {A}pplications ({N}orth {B}ay,
  {ON}, 1997)}, volume~22, pages 71--95.

\bibitem[{Escard{\'o}(1997)}]{Esc97}
\textsc{Escard{\'o}, M.~H.} (1997), Injective spaces via the filter monad, in
  \emph{Proceedings of the 12th Summer Conference on General Topology and its
  Applications (North Bay, ON, 1997)}, volume~22, pages 97--100.

\bibitem[{Halmos(1956)}]{Hal56}
\textsc{Halmos, P.~R.} (1956), Algebraic logic. {I}. {M}onadic {B}oolean
  algebras, \emph{Compositio Math.} \textbf{12}, 217--249.

\bibitem[{Hofmann(2002)}]{Hof02}
\textsc{Hofmann, D.} (2002), On a generalization of the {S}tone-{W}eierstrass
  theorem, \emph{Appl. Categ. Structures} \textbf{10}~(6), 569--592.

\bibitem[{Hofmann(2012)}]{Hof12a}
\textsc{Hofmann, D.} (2012), The enriched {V}ietoris monad on representable
  spaces, Technical report, \eprint{arXiv:math.CT/1212.5539}.

\bibitem[{Hofmann(2013)}]{Hof13}
\textsc{Hofmann, D.} (2013), Duality for distributive spaces, \emph{Theory
  Appl. Categ.} \textbf{28}~(3), 66--122, \eprint{arXiv:math.CT/1009.3892}.

\bibitem[{Johnstone(1986)}]{Joh86}
\textsc{Johnstone, P.~T.} (1986), \emph{Stone spaces}, volume~3 of
  \emph{Cambridge Studies in Advanced Mathematics}, Cambridge University Press,
  Cambridge, xxii+370 pages, reprint of the 1982 edition.

\bibitem[{J{\'o}nsson and Tarski(1951)}]{JT51}
\textsc{J{\'o}nsson, B.} and \textsc{Tarski, A.} (1951), Boolean algebras with
  operators. {I}, \emph{Amer. J. Math.} \textbf{73}, 891--939.

\bibitem[{J{\'o}nsson and Tarski(1952)}]{JT52}
\textsc{J{\'o}nsson, B.} and \textsc{Tarski, A.} (1952), Boolean algebras with
  operators. {II}, \emph{Amer. J. Math.} \textbf{74}, 127--162.

\bibitem[{Jung(2004)}]{Jun04}
\textsc{Jung, A.} (2004), Stably compact spaces and the probabilistic
  powerspace construction, in J.~Desharnais and P.~Panangaden, editors,
  \emph{Domain-theoretic Methods in Probabilistic Processes}, volume~87, 15pp.

\bibitem[{Kupke \emph{et~al.}(2004)Kupke, Kurz and Venema}]{KKV04}
\textsc{Kupke, C.}, \textsc{Kurz, A.} and \textsc{Venema, Y.} (2004), Stone
  coalgebras, \emph{Theoret. Comput. Sci.} \textbf{327}~(1-2), 109--134.

\bibitem[{Lawson(2011)}]{Law11}
\textsc{Lawson, J.} (2011), Stably compact spaces, \emph{Math. Structures
  Comput. Sci.} \textbf{21}~(1), 125--169.

\bibitem[{MacDonald and Sobral(2004)}]{MS04}
\textsc{MacDonald, J.} and \textsc{Sobral, M.} (2004), Aspects of monads, in
  \emph{Categorical foundations}, volume~97 of \emph{Encyclopedia Math. Appl.},
  pages 213--268, Cambridge Univ. Press, Cambridge.

\bibitem[{Manes(1969)}]{Man69}
\textsc{Manes, E.~G.} (1969), {A triple theoretic construction of compact
  algebras.}, {Sem. on Triples and Categorical Homology Theory, ETH Z{\"u}rich
  1966/67, Lect. Notes Math. 80, 91-118 (1969).}

\bibitem[{Nachbin(1950)}]{Nac50}
\textsc{Nachbin, L.} (1950), \emph{Topologia e Ordem}, Univ. of Chicago Press,
  in {P}ortuguese, {E}nglish translation: Topology and Order, Van Nostrand,
  Princeton (1965).

\bibitem[{Negrepontis(1971)}]{Neg71}
\textsc{Negrepontis, J.~W.} (1971), Duality in analysis from the point of view
  of triples, \emph{J. Algebra} \textbf{19}, 228--253.

\bibitem[{Petrovich(1996)}]{Pet96}
\textsc{Petrovich, A.} (1996), Distributive lattices with an operator,
  \emph{Studia Logica} \textbf{56}~(1-2), 205--224, special issue on Priestley
  duality.

\bibitem[{Poppe(1966)}]{Pop66}
\textsc{Poppe, H.} (1966), Einige {B}emerkungen \"uber den {R}aum der
  abgeschlossenen {M}engen, \emph{Fund. Math.} \textbf{59}, 159--169.

\bibitem[{Porst(1994)}]{Por94}
\textsc{Porst, H.-E.} (1994), What is concrete equivalence?, \emph{Appl. Categ.
  Structures} \textbf{2}~(1), 57--70.

\bibitem[{Porst and Tholen(1991)}]{PT91}
\textsc{Porst, H.-E.} and \textsc{Tholen, W.} (1991), Concrete dualities, in
  \emph{Category theory at work (Bremen, 1990)}, volume~18 of \emph{Res. Exp.
  Math.}, pages 111--136, Heldermann, Berlin.

\bibitem[{Priestley(1970)}]{Pri70}
\textsc{Priestley, H.~A.} (1970), Representation of distributive lattices by
  means of ordered stone spaces, \emph{Bull. London Math. Soc.} \textbf{2},
  186--190.

\bibitem[{Priestley(1972)}]{Pri72}
\textsc{Priestley, H.~A.} (1972), Ordered topological spaces and the
  representation of distributive lattices, \emph{Proc. London Math. Soc. (3)}
  \textbf{24}, 507--530.

\bibitem[{Pumpl{\"u}n(1970)}]{Pum70}
\textsc{Pumpl{\"u}n, D.} (1970), Eine {B}emerkung \"uber {M}onaden und
  adjungierte {F}unktoren, \emph{Math. Ann.} \textbf{185}, 329--337.

\bibitem[{Pumpl{\"u}n(1988)}]{Pum88}
\textsc{Pumpl{\"u}n, D.} (1988), Eilenberg--{M}oore algebras revisited,
  \emph{Seminarberichte, FB Mathematik u.\ Inf., Fernuniversit{\"a}t Hagen}
  \textbf{29}, 97--144.

\bibitem[{Sambin and Vaccaro(1988)}]{SV88}
\textsc{Sambin, G.} and \textsc{Vaccaro, V.} (1988), Topology and duality in
  modal logic, \emph{Ann. Pure Appl. Logic} \textbf{37}~(3), 249--296.

\bibitem[{Simmons(1982)}]{Sim82}
\textsc{Simmons, H.} (1982), A couple of triples, \emph{Topology Appl.}
  \textbf{13}~(2), 201--223.

\bibitem[{Stone(1936)}]{Sto36}
\textsc{Stone, M.~H.} (1936), The theory of representations for {B}oolean
  algebras, \emph{Trans. Amer. Math. Soc.} \textbf{40}~(1), 37--111.

\bibitem[{Stone(1938)}]{Sto38}
\textsc{Stone, M.~H.} (1938), Topological representations of distributive
  lattices and {B}rouwerian logics, \emph{\v{C}asopis pro p\v{e}stov\'an\'i
  matematiky a fysiky} \textbf{67}~(1), 1--25,
  \eprint{http://dml.cz/handle/10338.dmlcz/124080}.

\bibitem[{Tholen(1974)}]{Tho74}
\textsc{Tholen, W.} (1974), \emph{Relative Bildzerlegungen und algebraische
  Kategorien}, Ph.D. thesis, Westf\"{a}lische Wilhelms-Universit\"{a}t
  M\"{u}nster.

\bibitem[{Tholen(2009)}]{Tho09}
\textsc{Tholen, W.} (2009), Ordered topological structures, \emph{Topology
  Appl.} \textbf{156}~(12), 2148--2157.

\bibitem[{Vietoris(1922)}]{Vie22}
\textsc{Vietoris, L.} (1922), Bereiche zweiter {O}rdnung, \emph{Monatsh. Math.
  Phys.} \textbf{32}~(1), 258--280.

\bibitem[{Wright(1957)}]{Wri57}
\textsc{Wright, F.~B.} (1957), Some remarks on {B}oolean duality,
  \emph{Portugal. Math.} \textbf{16}, 109--117,
  \eprint{http://eudml.org/doc/114792}.

\end{thebibliography}
\end{document}